\renewcommand\epsilon{\varepsilon}
\renewcommand\phi{\varphi}
\newcommand{\amod}{A\mbox{-}\mathsf{Mod}}
\newcommand{\lmapsto}{\longmapsto}
\newcommand{\lto}{\longrightarrow}
\newcommand{\op}{\mathrm{op}}
\newcommand{\C}{\mathcal C}
\newcommand{\id}{\mathsf{id}}
\newcommand{\ma}{\mathcal A}
\newtheorem{thm}{Theorem}[section]
\newtheorem{prop}[thm]{Proposition}
\newtheorem{lem}[thm]{Lemma}
\newtheorem{cor}[thm]{Corollary}
\theoremstyle{definition}
\newtheorem{exa}[thm]{Example}
\newtheorem{defn}[thm]{Definition}
\theoremstyle{remark}
\newtheorem{rem}[thm]{Remark}
\title{Factorisations of distributive laws}
\author{Ulrich Kr\"ahmer}
\author{Paul Slevin}
\begin{document}
\begin{abstract}
Recently, B\"ohm and \c Stefan constructed duplicial
(paracyclic) objects from distributive laws between
(co)monads. Here
we define the category of factorisations of a
distributive law,
show that it acts on this construction, and
give some explicit examples.
\end{abstract}
\maketitle

\tableofcontents
\section{Introduction}
Distributive laws between monads were originally
defined by Beck in~\cite{MR0241502} and correspond to
monad structures on the composite of the two monads.
They have found many applications in mathematics as
well as computer science; see
e.g.~\cite{MR2520969,MR2504663,MR1692751,MR2220892}. 

Recently, distributive laws have been used by
B\"ohm and \c Stefan \cite{MR2415479} to construct new
examples of duplicial objects \cite{MR826872}, and hence cyclic
homology theories.
The paradigmatic example of such a theory is the cyclic
homology $HC(A)$ of an
associative algebra $A$ \cite{MR823176,MR695483}. 
It was
observed by Kustermans, Murphy, and Tuset \cite{MR1943179}
that the functor $HC$ can be twisted by automorphisms
of $A$. The aim of the present paper is to extend this
procedure to any duplicial object defined by a
distributive law.

Given a distributive law $ \chi $ we define in
Section~\ref{factdef} the category
$\mathcal{F}(\chi)$ of its \emph{factorisations}. 
The main technical results are the definition of a
monoidal structure on $\mathcal{F}(\chi)$
(Lemma~\ref{welldefined} and Proposition~\ref{strict
monoidal}), a characterisation of the comonoids in
$\mathcal{F}(\chi)$ (Proposition~\ref{chi-comonoids}), and
the definition of actions of 
$\mathcal{F}(\chi)$ on
the category of admissible data (septuples in \cite{MR2415479})
which turns the latter into an $\mathcal{F}(\chi)$-bimodule category 
(Theorem~\ref{main theorem} and Corollary~\ref{korollar}).

The remainder of the paper is devoted to examples. We
begin by considering factorisations of distributive
laws on
Eilenberg-Moore categories, interpreting these as flat
connections (Section~\ref{flat}). In particular, we 
present the twisting of cyclic homology in
this framework (Section~\ref{connectexample}). We then
describe examples arising from Hopf algebras
(Section~\ref{mixed}). The final examples are concerned
with BD-laws, braidings (Section~\ref{braided}), and
quantum doubles of Hopf algebras
(Section~\ref{qd}).

Throughout this paper, $\mathcal{A,B,C}\ldots$ are
categories, $A,B,C,\ldots$ are functors, and greek
letters are used to denote natural transformations. We
use $ \circ $ to denote composition of morphisms and
vertical composition of natural transformations. The
composition of functors and the horizontal composition of natural transformations will
be denoted simply by concatenation. The identity
morphism, functor and natural transformation is
denoted by $\id$. However, we denote the horizontal
composition $ \alpha \,\id_A \beta$ by $ \alpha A \beta
$.

\subsection*{Acknowledgements}
UK is supported by the EPSRC First Grant ``Hopf
Algebroids and Operads'' and the 
Polish Government Grant 2012/06/M/ST1/00169. 
PS is supported by an EPSRC Doctoral
Training Award. 

\section{Preliminaries}
In this section, we recall basic definitions and results that
are needed later.

\subsection{(Co)monads}
Let $\mathcal A$ be a category.
\begin{defn}
A $\emph{comonad on $\ma$}$ is a triple $\mathbb C = (C,
\Delta, \epsilon)$
where $C$ is an endofunctor on $\mathcal A$, and $\Delta \colon C \lto CC$ and
$\epsilon \colon C \lto \mathsf{id}_\ma$ are natural
transformations such that
$$
	C \Delta \circ \Delta =
	\Delta C \circ \Delta,\quad
	\varepsilon C \circ \Delta = \id_C =
	C\varepsilon  \circ \Delta,
$$
that is, the
two diagrams
$$
\xymatrix{
	C \ar[r]^-\Delta \ar[d]_-\Delta & CC \ar[d]^-{C \Delta} \\
	CC \ar[r]_-{\Delta C} & CCC 
	}
	\quad\quad\quad
\xymatrix{
	C \ar[r]^-{\Delta} \ar[d]_-{\Delta} \ar@{=}[dr] & CC \ar[d]^-{C \epsilon} \\
	CC \ar[r]_-{\epsilon C} & C 
}
$$
commute.
\end{defn}

In other words, a comonad is a comonoid (or coalgebra) in
the monoidal category $[\ma,\ma]$ of endofunctors on $\ma$ 
(with composition as tensor product).
Dually, a \emph{monad} on a category $\mathcal C$ is a monoid
(algebra) in $[\mathcal C,\mathcal C]$. 

\subsection{Module categories} 
Next, we recall the
notion of a module category (also known as an
$\mathcal M$-category) over a monoidal
category $(\mathcal M, \otimes, \mathbf 1)$.
For the purpose of this paper, all monoidal categories
and their module categories are strict, and
by abuse of notation we will write
$\mathcal M$ to refer to the whole triple $(\mathcal
M,\otimes,\mathbf 1)$.

\begin{defn}
A \emph{left module category for} $\mathcal M$ is a
pair $(\mathcal C, \rhd)$ where $\mathcal C$ is a
category and $\rhd \colon \mathcal M \times \mathcal C
\lto \mathcal C$ is a functor such that we have
functorial identities
\begin{align*}
\mathbf 1 \rhd P &= P &\text{and} & &X \rhd (Y \rhd P) &= (X \otimes Y) \rhd P.
\end{align*}
for all objects $X,Y$ in $\mathcal M$ and $P$ in $\mathcal C$. We call $\rhd$ the \emph{left action of $\mathcal M$ on $\mathcal C$}.
\end{defn}

Dually, one defines a \emph{right module category} $(\mathcal D, \lhd)$.
A \emph{bimodule category} is a triple $(\mathcal C,
\rhd, \lhd)$ where $(\mathcal C, \rhd)$ and $(\mathcal
C,\lhd)$ are right respectively left module categories and the actions commute, i.e.\ for all objects $X,Y$ in $\mathcal M$ and $P$ in $\mathcal C$ 
 we have
\[
	X \rhd (P \lhd Y) = (X \rhd P) \lhd Y,
\]
again functorially in $X,Y$ and $P$.
We immediately have the following.
\begin{lem}\label{product bimodule}
Let $(\mathcal C, \rhd)$ and $(\mathcal D, \lhd)$ be
left respectively right module categories. 
Then $\mathcal C \times \mathcal D$ is a bimodule category 
with actions given by
$$
	X \rhd (P,Q) \lhd Y = (X \rhd P, Q \lhd Y)
$$
for all objects $X,Y$ in $\mathcal M$, $P$ in $\mathcal C$ and $Q$ in $\mathcal D$.
\end{lem}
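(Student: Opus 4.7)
The plan is to define the actions component-wise and then read off every bimodule axiom from the corresponding axiom for $(\mathcal C,\rhd)$ or $(\mathcal D,\lhd)$, exploiting the fact that the left action only touches the first factor and the right action only the second.

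First I would make the definitions explicit: set
\[
X \rhd (P,Q) := (X \rhd P,\, Q), \qquad (P,Q) \lhd Y := (P,\, Q \lhd Y),
\]
so that the joint formula in the statement, $X \rhd (P,Q) \lhd Y = (X \rhd P,\, Q \lhd Y)$, is automatic. Functoriality of $\rhd$ and $\lhd$ on $\mathcal C \times \mathcal D$ is immediate from the functoriality of the two given actions, since a morphism $(f,g)$ in $\mathcal C \times \mathcal D$ is sent to $(X \rhd f,\, g)$ or $(f,\, g \lhd Y)$ respectively.

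Next I would verify the three sets of axioms. For the left $\mathcal M$-action on $\mathcal C \times \mathcal D$: the unit axiom $\mathbf 1 \rhd (P,Q) = (\mathbf 1 \rhd P, Q) = (P,Q)$ holds by the unit axiom in $(\mathcal C, \rhd)$, and associativity
\[
X \rhd (Y \rhd (P,Q)) = (X \rhd (Y \rhd P),\, Q) = ((X \otimes Y) \rhd P,\, Q) = (X \otimes Y) \rhd (P,Q)
\]
follows from associativity in $(\mathcal C, \rhd)$. The right $\mathcal M$-action axioms are verified identically on the second factor using $(\mathcal D, \lhd)$. Finally, the commutation of the two actions is a direct computation:
\[
X \rhd ((P,Q) \lhd Y) = X \rhd (P,\, Q \lhd Y) = (X \rhd P,\, Q \lhd Y) = (X \rhd P,\, Q) \lhd Y = (X \rhd (P,Q)) \lhd Y,
\]
with all equalities holding on the nose because the two actions act on disjoint coordinates. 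Functoriality in $X$, $Y$, and $(P,Q)$ is inherited from the component actions.

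There is no genuine obstacle here; the only thing to be careful about is that the problem takes place in the strict setting, so every axiom is an equality of functors rather than coherent isomorphism, and the proof is simply a coordinate-wise bookkeeping argument.
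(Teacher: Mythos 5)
Your proof is correct and is exactly the routine coordinate-wise verification that the paper regards as immediate (it offers no proof beyond the phrase ``We immediately have the following''). Spelling out the component-wise actions and checking unit, associativity, and commutation of the actions is precisely the intended argument, so nothing more is needed.
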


\subsection{Eilenberg-Moore categories}\label{Eilenberg-Moore}
The comonads we are mostly interested in arise as
restrictions of monads to their Eilenberg-Moore
categories.

\begin{defn}
Let $(\mathcal C, \rhd)$ be a left module category for
a monoidal category $\mathcal M$, and let $\mathbb B = (B, \mu, \eta)$ be a monoid in $\mathcal M$. The \emph{Eilenberg-Moore category of $\mathbb B$}, denoted by $\mathcal C^\mathbb B$, is the category whose objects are pairs $(X, \alpha)$, where $X$ is an
object of $\mathcal C$ 
and $\alpha \colon B \rhd X \lto X$ is a morphism in
$\mathcal C$ 
such that the diagrams
$$
\xymatrix{
	\ar[dr]_-{\mu \rhd \mathsf{id}_X}(B \otimes B) \rhd X
	\ar@{=}[r] & B\rhd (B \rhd X)
	\ar[r]^-{\id_B \rhd \alpha}
	& B \rhd X  \ar[d]^-\alpha\\
& B \rhd X \ar[r]_-\alpha & X
}
\quad\quad\quad
\xymatrix{
	\mathbf 1 \rhd X \ar@{=}[dr] \ar[r]^{\eta \rhd
\mathsf{id}_X} & B \rhd X \ar[d]^-\alpha \\
& X
}
$$
commute, and whose morphisms $f \colon (X, \alpha) \lto
(X', \alpha')$ are morphisms $f \colon X \lto X'$ in
$\mathcal C$ 
such that the diagram
$$
\xymatrix{
B \rhd X \ar[d]_-\alpha \ar[r]^-{\id_B \rhd f} & B \rhd X' \ar[d]^-{\alpha '} \\
X \ar[r]_-f & X'
}
$$
commutes.
\end{defn}

Now observe that the monoid $\mathbb B$ defines a comonad
$\tilde{\mathbb B} = (\tilde B, \tilde \Delta, \tilde \epsilon)$ on
$\ma=\mathcal C^\mathbb B$ where $\tilde B$ is defined on objects and 
morphisms by
\begin{align*}
	\tilde B(X, \alpha) &= (B \rhd X, \mu\rhd
	\mathsf{id}_X), &  \tilde B(f) &= \id_B \rhd f,
\end{align*}
and $\tilde \Delta, \tilde\epsilon$ are defined on 
objects $(X, \alpha)$ by
\begin{align*}
\xymatrix@C=5em{
B \rhd X = B \rhd (\mathbf 1 \rhd X) \ar[r]^-{\id_B \rhd (\eta \rhd \mathsf{id}_X)} & B \rhd (B \rhd X)
}
\quad\quad\quad
\xymatrix{
B \rhd X \ar[r]^-{\alpha}&X
}
\end{align*}
respectively.

In particular, every category $\mathcal C$
is in an obvious way a module category over
$[\mathcal C,\mathcal C]$. In this case, our definition
of Eilenberg-Moore category of a monad $\mathbb B$ on
$\mathcal C$ is the same as the usual
definition~\cite[p. 139]{MR1712872}.

\subsection{Distributive laws}\label{distributive}
Next we define distributive laws. Note that we consider
them between (co)monads and arbitrary endofunctors as
is common in the computer science
literature, see e.g.~\cite{MR1692751}.

\begin{defn}
Let $\mathbb T = (T, \Delta, \epsilon)$ be a comonad on
$\mathcal A$ and let $C$ be an endofunctor on $\mathcal A$. A \emph{distributive law between the comonad} $\mathbb T$ \emph{and the endofunctor} $C$ is a transformation $\chi \colon TC \lto CT$ such that the two diagrams
$$
\xymatrix{
\ar[d]_-{\Delta C} TC \ar[r]^-{\chi} &  CT  
\ar[r]^-{C \Delta} & CTT  \\
TTC \ar[rr]_-{T \chi} && TCT \ar[u]_-{\chi T}
} \quad\quad\quad
\xymatrix{
TC \ar[r]^-\chi \ar[dr]_-{\epsilon C} & CT \ar[d]^-{C \epsilon} \\
& C
}
$$
commute. We denote this by $\chi \colon \mathbb T \lto C$. 
Analogously, we define a distributive law $\chi \colon T \lto \mathbb C$ between an endofunctor $T$ and a comonad $\mathbb C$. A \emph{comonad distributive law} $\chi \colon \mathbb T \lto \mathbb C$ is a transformation $\chi$ which is a distributive law between endofunctors and comonads in both ways.
\end{defn}

Dually, we can define distributive laws involving
monads; distributive laws from a monad to a comonad
are usually called mixed distributive laws. 

One application of distributive laws is to lift
endofunctors to Eilenberg-Moore categories: 
let $\mathbb B$ be a monad on a category $\mathcal C$ and
$\theta \colon \mathbb B \lto D$ be a 
distributive law. We define a functor
$\tilde D \colon \mathcal C^\mathbb B \lto \mathcal C^\mathbb B$ 
as follows.
On objects we define
$$
	\tilde D(X, \alpha) = (DX, D\alpha \circ \theta_X)
$$
and we define $\tilde Df = Df$ on morphisms. The
distributive law $\theta$ lifts to give one $\theta
\colon\tilde{\mathbb B} \lto \tilde D$ where
$\tilde{\mathbb B}$ is the comonad described in Section~\ref{Eilenberg-Moore}. If $D$ is part of a comonad $\mathbb D = (D, \Delta, \epsilon)$, and $\theta$ is a mixed 
 distributive law $\mathbb B \lto \mathbb D$, then $\tilde D$ is part of a comonad
$$
\tilde {\mathbb D} = (\tilde D, \Delta, \epsilon)
$$
and $\theta$ lifts to a comonad distributive law $ \theta \colon\tilde{\mathbb B} \lto \tilde{\mathbb D}$.

See~\cite{MR0241502,MR0323864} for more details on distributive laws.

\subsection{The categories of $\chi$-coalgebras}
Let $\mathbb T = \left(T, \Delta^T, \epsilon^T\right)$ and $\mathbb C = \left(C, \Delta^C, \epsilon^C\right)$ be comonads on $\mathcal A$, and let $\chi \colon \mathbb T \lto \mathbb C$ be a distributive law.
\begin{defn}
A \emph{right $\chi$-coalgebra} is a triple $(M,\mathcal X, \rho)$ where $\mathcal X$ is a category, $M \colon \mathcal X \lto \mathcal A$ is a functor and $\rho \colon TM \lto CM$ is a natural transformation such that the diagrams
$$
	\xymatrix{
	TM \ar[r]^-{\Delta^T M} 
	\ar[d]_-{\rho} & TTM \ar[r]^{T\rho} & TCM \ar[d]^-{\chi_M} \\
CM \ar[r]_-{\Delta^C M} 
& CCM & CTM \ar[l]^-{C\rho}
}
\quad\quad\quad
\xymatrix{
& TM \ar[dl]_-{\epsilon^T M}
\ar[d]^-{\rho}\\
M & CM \ar[l]^-{\epsilon^C M} 
}
$$
commute. A \emph{morphism of right $\chi$-coalgebras} between $(M, \mathcal X, \rho)$ and $(M', \mathcal X', \rho')$ is a pair $({\varphi},F)$, where $F \colon \mathcal X \lto \mathcal X'$ is a functor and ${\varphi} \colon M  \lto M'F$ is a natural transformation such that the diagram
$$
\xymatrix{
TM \ar[r]^-{T{\varphi}} \ar[d]_-\rho & TM'F\ar[d]^-{\rho ' F}\\ 
CM \ar[r]_-{C{\varphi}} & CM'F
}
$$commutes. We define composition of morphisms by
$$
({\varphi}', F') \circ ({\varphi}, F) = ({\varphi}' F \circ {\varphi}, F'F) 
$$ and we define identity morphisms by $\mathsf{id}_{(M, \mathcal X, \rho)} = (\mathsf{id}_M, \mathsf{id}_\mathcal X)$. We denote the category of right $\chi$-coalgebras by $\mathcal R(\chi)$.
\end{defn}

Dually, we define the category
$\mathcal L(\chi)$ of \emph{left $\chi$-coalgebras}
$(N, \mathcal Y, \lambda)$.

\subsection{The construction of B\"ohm and \c Stefan}
Finally, we recall the construction of duplicial
functors from a comonad distributive law
$\chi \colon \mathbb T \lto \mathbb C$ on a
category $\mathcal A$ due to B\"ohm and \c
Stefan.

\begin{defn}
 The category of \emph{admissible data over} $\chi$ is the product category
$$
\mathcal S(\chi) := \mathcal R (\chi) \times \mathcal L (\chi).
$$
Admissible data are called \emph{admissible septuples} in~\cite{MR2415479}.
\end{defn}

To every admissible datum $(M, \mathcal X, \rho, N, \mathcal Y, \lambda)$ there is an associated duplicial functor $\mathcal X \lto \mathcal Y$ defined by
$$
D_\bullet (M, \mathcal X, \rho, N, \mathcal Y, \lambda) = N  T^{\bullet + 1} M
$$
which is given objectwise by taking the bar resolution of $M$ with respect to the comonad $\mathbb T$, and then applying the functor $N$. If $\mathcal Y$ is an abelian category, we can apply the duplicial functor to an object $X$ in $\mathcal X$ resulting in a duplicial object in $\mathcal Y$ of which we can take the cyclic homology.

This construction, which unifies and generalises the
definition of the cyclic homology of an associative
algebra as well as Hopf-cyclic homology, is detailed
in~\cite{MR2415479} for the case that
$\mathcal X = \{ * \}$ is the terminal category.

\section{Theory}\label{results}
\subsection{The category of factorisations $\mathcal F(\chi)$}\label{factdef}
Throughout this section, let $\mathbb T = \left(T, \Delta^T, \epsilon^T\right)$ and $\mathbb C = \left(C, \Delta^C, \epsilon^C\right)$ be comonads on a category $\mathcal A$, and let $\chi \colon \mathbb T \lto \mathbb C$ be a distributive law.
The main definition of the present paper is the
following:

\begin{defn}
A \emph{factorisation of} $\chi$ is a triple
$(\Sigma, \sigma, \gamma)$ where
$\Sigma$ is an endofunctor on $\mathcal A$, and $\sigma \colon \mathbb T \lto \Sigma$ and $\gamma \colon \Sigma \lto \mathbb C$ are
distributive laws satisfying
the \emph{Yang-Baxter} condition; that is, the hexagon
$$
\xymatrix@R=0.5em{
& \Sigma TC \ar[r]^-{\Sigma \chi} &
\Sigma CT \ar[dr]^-{\gamma T} &\\ 
T\Sigma C \ar[ur]^-{\sigma C} \ar[dr]_-{T\gamma} 
& & & C \Sigma T \\
& TC\Sigma \ar[r]_-{\chi{\Sigma}} & CT\Sigma
\ar[ur]_-{C \sigma} &
}
$$
commutes. A \emph{morphism} $\alpha \colon (\Sigma, \sigma, \gamma) \lto (\Sigma', \sigma', \gamma')$ of factorisations is a natural transformation $\alpha \colon \Sigma \lto \Sigma'$ which is compatible with $T$ and $C$ in the sense that the diagrams
$$
\xymatrix{
T\Sigma \ar[r]^-{T \alpha} \ar[d]_-{\sigma} &
T \Sigma ' \ar[d]^-{\sigma '} \\
\Sigma T \ar[r]_-{\alpha T} & \Sigma ' T 
}
\quad \quad \quad
\xymatrix{
\Sigma C \ar[d]_-{\gamma} \ar[r]^-{\alpha C} 
& \Sigma ' C\ar[d]^-{\gamma '} \\
C \Sigma \ar[r]_-{C\alpha} & C \Sigma '
}
$$
commute. There are identity morphisms $\mathsf{id}_{(\Sigma, \sigma, \gamma)} = \mathsf{id}_\Sigma$, and composition of morphisms is given by the vertical composite. This defines the \emph{category of factorisations} which we denote by $\mathcal F(\chi)$.
\end{defn}
Similarly, we may also define factorisations of a monad or mixed distributive law.
\subsection{The monoidal structure} 
We define a functor
$$
\otimes \colon \mathcal F(\chi) \times \mathcal F(\chi) \lto \mathcal F(\chi)
$$
as follows. On objects we define
$$
	(\Sigma, \sigma, \gamma)
	\otimes
	(\Sigma', \sigma', \gamma') =
	(\Sigma\Sigma', \Sigma \sigma' \circ \sigma{\Sigma'}, \gamma{\Sigma'} \circ \Sigma \gamma')
$$
and for two morphisms $\alpha, \beta$ we define $\alpha \otimes \beta$ to be $\alpha\beta$, the horizontal composite of the natural transformations.
\begin{lem}\label{welldefined}
The assignment $\otimes$ is a well-defined functor.
\end{lem}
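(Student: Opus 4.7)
The statement breaks into three parts: (i) the triple $(\Sigma\Sigma', \bar\sigma, \bar\gamma)$, with $\bar\sigma := \Sigma\sigma' \circ \sigma\Sigma'$ and $\bar\gamma := \gamma\Sigma' \circ \Sigma\gamma'$, is a factorisation of $\chi$; (ii) the horizontal composite $\alpha\beta$ of two morphisms in $\mathcal F(\chi)$ is a morphism from the tensor product of their sources to the tensor product of their targets; (iii) the assignment respects identities and vertical composition. Parts (i) and (ii) give well-definedness, and (iii) is functoriality.

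For part (i), I would separately verify that $\bar\sigma$ is a distributive law $\mathbb T \lto \Sigma\Sigma'$, that $\bar\gamma$ is a distributive law $\Sigma\Sigma' \lto \mathbb C$, and that the Yang--Baxter hexagon holds for the triple. The first two are the classical fact that distributive laws of a (co)monad with endofunctors close under composition of the endofunctors: coassociativity of $\bar\sigma$ expands, using coassociativity of $\sigma$ and $\sigma'$, to a form that needs the interchange identity $\Sigma T\sigma' \circ \sigma T\Sigma' = \sigma\Sigma'T \circ T\Sigma\sigma'$ (both sides being the horizontal composite $\sigma\sigma'$); the counit condition reduces to $\Sigma\epsilon^T\Sigma' \circ \sigma\Sigma' = \epsilon^T\Sigma\Sigma'$ by whiskering the counit condition for $\sigma$. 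The verification for $\bar\gamma$ is dual.

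The real content is the Yang--Baxter condition for $(\Sigma\Sigma', \bar\sigma, \bar\gamma)$. My plan is to bridge the two paths of the hexagon in three stages. Starting from the outer composite $\gamma\Sigma'T \circ \Sigma\gamma'T \circ \Sigma\Sigma'\chi \circ \Sigma\sigma'C \circ \sigma\Sigma'C$, the middle three arrows assemble as $\Sigma(\gamma'T \circ \Sigma'\chi \circ \sigma'C)$; applying the Yang--Baxter hexagon for $(\Sigma', \sigma', \gamma')$ rewrites them as $\Sigma(C\sigma' \circ \chi\Sigma' \circ T\gamma')$. Next, interchange applied to the horizontal composites $\gamma\sigma'$ and $\sigma\gamma'$ slides the relevant copies of $\sigma'$ and $\gamma'$ to the outside, leaving a $\Sigma'$-whiskered block $(\gamma T \circ \Sigma\chi \circ \sigma C)\Sigma'$ in the middle. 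Applying the Yang--Baxter hexagon for $(\Sigma, \sigma, \gamma)$ to this block converts it into $(C\sigma \circ \chi\Sigma \circ T\gamma)\Sigma'$, and the resulting expression is exactly the inner composite $C\Sigma\sigma' \circ C\sigma\Sigma' \circ \chi\Sigma\Sigma' \circ T\gamma\Sigma' \circ T\Sigma\gamma'$.

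Parts (ii) and (iii) are then bookkeeping. Compatibility of $\alpha\beta$ with $\bar\sigma$ is obtained by stacking the $\sigma$-compatibility square for $\alpha$, whiskered by the appropriate copies of $\Sigma'$, on top of the $\sigma'$-compatibility square for $\beta$, whiskered by the appropriate copies of $\Sigma$, and similarly for $\bar\gamma$; the glue is the interchange law plus naturality of $\alpha$ and $\beta$. Preservation of identities is immediate from $\id_\Sigma\id_{\Sigma'} = \id_{\Sigma\Sigma'}$, and preservation of composition is the interchange identity $(\alpha_2 \circ \alpha_1)(\beta_2 \circ \beta_1) = (\alpha_2\beta_2) \circ (\alpha_1\beta_1)$. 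The only step with genuine combinatorial content is the Yang--Baxter verification above; every other step reduces to a use of interchange or a naturality square.
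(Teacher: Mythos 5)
Your proof is correct and follows essentially the same route as the paper: the Yang--Baxter condition for the composite is reduced to the two given hexagons glued by naturality (interchange) of $\sigma$ and $\gamma$, compatibility of $\alpha\beta$ comes from stacking the two compatibility squares with naturality as glue, and functoriality is the interchange law; your equational three-stage rewriting is just the algebraic form of the paper's pasted diagram. The only difference is that you also spell out that $\Sigma\sigma'\circ\sigma\Sigma'$ and $\gamma\Sigma'\circ\Sigma\gamma'$ are themselves distributive laws, a classical fact the paper leaves implicit, and your sketch of that verification is correct.
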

\begin{proof}
Firstly, $\otimes$ is well-defined on objects if $\Sigma \sigma' \circ \sigma{\Sigma'}$ and $ \gamma{\Sigma'} \circ \Sigma \gamma'$ satisfy the Yang-Baxter condition. Consider the following diagram
$$
\xymatrix@R=0.9em{
 & & \Sigma \Sigma' TC \ar[r]^-{\Sigma \Sigma' \chi} &
\Sigma \Sigma'CT \ar[dr]^-{\Sigma \gamma' T} & & 
 \\& \Sigma T \Sigma' C \ar[ur]^-{\Sigma \sigma' C}
\ar[dr]_-{\Sigma T \gamma'} & & &\Sigma C \Sigma' T
\ar[dr]^-{\gamma{\Sigma'T}} & \\
T \Sigma \Sigma'C \ar[dr]_-{T \Sigma \gamma'}
\ar[ur]^-{\sigma{\Sigma ' C}} &
&\Sigma T C \Sigma' \ar[r]_-{\Sigma \chi{\Sigma'}}
& \Sigma CT \Sigma' \ar[ur]_-{\Sigma C \sigma'}
\ar[dr]_-{\gamma{T \Sigma'}} & & C \Sigma \Sigma' T \\
& T \Sigma C \Sigma' \ar[ur]_-{\sigma{C \Sigma'}} \ar[dr]_-{T \gamma{\Sigma'}} & &  & C \Sigma T \Sigma' \ar[ur]_-{C \Sigma{\sigma '}} \\
& &TC \Sigma \Sigma' \ar[r]_-{\chi{\Sigma\Sigma'}}  & CT \Sigma \Sigma' \ar[ur]_-{C \sigma{\Sigma' }} & &
}
$$
The left square commutes by naturality of $\sigma$ and
the right square commutes by naturality of $\gamma$.
The inner hexagons commute by the Yang-Baxter
conditions. Therefore, the outer hexagon commutes, so the required condition is satisfied.

Secondly, let
\begin{align*}
  \alpha \colon (\Sigma, \sigma, \gamma) &\lto (\Gamma, \kappa, \nu)  & &\text{and}  &\beta \colon (\Sigma', \sigma', \gamma') & \lto (\Gamma', \kappa', \nu')
\end{align*}
be morphisms in $\mathcal F(\chi)$. Consider the diagram
$$
\xymatrix{
T \Sigma \Sigma' \ar[r]^-{T \alpha{\Sigma'}} \ar[d]_-{\sigma{\Sigma'}} & T \Gamma' \Sigma' \ar[d]^-{\kappa{\Sigma'}} \ar[r]^-{T \Gamma \beta} & T \Gamma \Gamma' \ar[d]^-{\kappa{\Gamma'}} \\
\Sigma T \Sigma'  \ar[d]_-{\Sigma \sigma'} \ar[r]_-{\alpha{T\Sigma'}} & \Gamma T \Sigma' \ar[d]^-{\Gamma \sigma'} \ar[r]_-{\Gamma T\beta} & \Gamma T \Gamma' \ar[d]^-{\Gamma \kappa'} \\
\Sigma \Sigma' T \ar[r]_-{\alpha{\Sigma' T}} & \Gamma \Sigma' T \ar[r]_-{\Gamma \beta T} & \Gamma \Gamma' T
}
$$
The bottom-left square commutes by naturality of
$\alpha$, the top-right square commutes by naturality
of $\kappa$, and the two remaining inner squares
commute since $\alpha$ and $\beta$ are compatible with
$T$. Therefore, the outer square commutes and $\alpha
\otimes \beta$ is compatible with $T$. A similar
argument shows that $\alpha \otimes \beta$ is
compatible with $C$. It is clear that $\otimes$
respects composition of morphisms and identity
morphisms. Therefore, $\otimes$ is well-defined on morphisms.
\end{proof}
Let $\mathbf 1$ denote the trivial factorisation $(\mathsf{id}_\mathcal A,\mathsf{id}_T,\mathsf{id}_C)$.
\begin{prop}\label{strict monoidal}
The triple $(\mathcal F(\chi),\otimes,\mathbf 1)$ is a strict
monoidal category.
\end{prop}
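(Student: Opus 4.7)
The plan is to verify the strict monoidal axioms in three steps: that $\mathbf 1$ is a factorisation, that $\otimes$ is strictly associative on objects and morphisms, and that $\mathbf 1$ is a strict two-sided unit for $\otimes$. Since Lemma \ref{welldefined} already establishes that $\otimes$ is a bifunctor on $\mathcal F(\chi) \times \mathcal F(\chi)$, no coherence isomorphisms need to be produced — only the strict identifications.

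First I would check that $\mathbf 1 = (\id_\mathcal A, \id_T, \id_C)$ is a factorisation. The coassociativity and counit diagrams for $\id_T$, viewed as a distributive law $\mathbb T \lto \id_\mathcal A$, reduce directly to the comonad axioms for $\mathbb T$, and symmetrically for $\id_C$. The Yang-Baxter hexagon with $\Sigma = \id_\mathcal A$, $\sigma = \id_T$, $\gamma = \id_C$ degenerates so that both paths around it equal $\chi$, hence commutes trivially.

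Next, strict associativity. Given $(\Sigma_i, \sigma_i, \gamma_i)$ for $i = 1, 2, 3$, both bracketings of the triple product yield $\Sigma_1 \Sigma_2 \Sigma_3$ as underlying endofunctor by strict associativity of functor composition. Expanding the $\sigma$-component in both bracketings and applying the interchange law for horizontal composition produces, in each case,
\[
\Sigma_1 \Sigma_2 \sigma_3 \circ \Sigma_1 \sigma_2 \Sigma_3 \circ \sigma_1 \Sigma_2 \Sigma_3,
\]
and an entirely analogous computation handles the $\gamma$-component. On morphisms, $(\alpha \otimes \beta) \otimes \delta$ and $\alpha \otimes (\beta \otimes \delta)$ both reduce to the triple horizontal composite $\alpha \beta \delta$, which is well-defined by the associativity of horizontal composition. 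The unit axioms are equally direct: $\mathbf 1 \otimes (\Sigma, \sigma, \gamma)$ has endofunctor $\id_\mathcal A \Sigma = \Sigma$ and $\sigma$-component $\id_\mathcal A \sigma \circ \id_T \Sigma = \sigma$ since horizontal composition with an identity natural transformation is neutral, and analogously for $\gamma$ and for the opposite side.

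No step poses a genuine obstacle: the entire content is that the $2$-category of categories has strictly associative and unital functor composition and horizontal composition of natural transformations. The only care required is in the triple-product calculation, where the interchange law must be invoked explicitly to confirm that the two bracketings agree on the nose rather than merely up to natural isomorphism.
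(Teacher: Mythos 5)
Your proposal is correct and takes essentially the same route as the paper: verify strict associativity by expanding both bracketings of the triple product to the same composite $\Sigma\Sigma'\sigma''\circ\Sigma\sigma'\Sigma''\circ\sigma\Sigma'\Sigma''$ (and its $\gamma$-analogue), note the strict unit identities, and observe everything holds functorially on morphisms. Your extra check that $\mathbf 1=(\id_\mathcal A,\id_T,\id_C)$ is itself a factorisation (where the required diagrams hold trivially, both sides being $\Delta^T$, $\epsilon^T$, resp.\ $\chi$) is left implicit in the paper and is a harmless addition.
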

\begin{proof}
It is clear that $\mathcal T \otimes \mathbf 1 = \mathbf 1 \otimes \mathcal T = \mathcal T$ for all factorisations $\mathcal T$. Consider the products of factorisations
\begin{align*}
&((\Sigma, \sigma, \gamma) \otimes (\Sigma', \sigma', \gamma')) \otimes (\Sigma'', \sigma'', \gamma'')\\
&= (\Sigma \Sigma', \Sigma \sigma' \circ \sigma{\Sigma'}, \gamma{\Sigma'} \circ \Sigma \gamma') \otimes (\Sigma'', \sigma'', \gamma'') \\
&= (\Sigma \Sigma' \Sigma'', \Sigma \Sigma'\sigma'' \circ \Sigma \sigma'{\Sigma''} \circ \sigma{\Sigma' \Sigma''}, \gamma{\Sigma' \Sigma''} \circ \Sigma \gamma'{\Sigma''} \circ \Sigma \Sigma' \gamma'')
\end{align*}
and
\begin{align*}
&(\Sigma, \sigma, \gamma) \otimes ((\Sigma', \sigma', \gamma') \otimes (\Sigma'', \sigma'', \gamma''))\\
&= (\Sigma, \sigma, \gamma) \otimes (\Sigma'\Sigma'', \Sigma'\sigma''\circ \sigma'{\Sigma''}, \gamma'{\Sigma''} \circ \Sigma' \gamma '')\\
&= (\Sigma \Sigma' \Sigma'', \Sigma \Sigma'\sigma'' \circ \Sigma \sigma'{\Sigma''} \circ \sigma{\Sigma' \Sigma''}, \gamma{\Sigma' \Sigma''} \circ \Sigma \gamma'{\Sigma''} \circ \Sigma \Sigma' \gamma'').
\end{align*}
These are equal so $\otimes$ is an associative tensor
product (observe that all equalities are functorial).
\end{proof}
\begin{rem}
If we ignore set theoretic issues, we can define a 2-category
$$
\underline{\mathsf{dist}} := \mathbf{Cmd} (\mathbf{Cmd}(\underline{\mathsf{CAT}^\op})^\op)
$$
where $\underline{\mathsf{CAT}}$ is the 2-category of categories, functors and natural transformations, $\mathbf{Cmd}$ denotes taking the 2-category of comonads, and $\op$ denotes reversal of 1-cells. The 0-cells of this 2-category are comonad distributive laws $\chi$
and we have
$$
\mathcal F(\chi) = \underline{\mathsf{dist}} (\chi, \chi)
$$
which is a strict monoidal category. This gives another
proof of Proposition~\ref{strict monoidal}. See~\cite{MR0299653,MR2863452} for the definition of $\mathbf{Cmd}$.
\end{rem}
\subsection{(Co)monads as (co)monoids in $\mathcal F(\chi)$}
By definition, a pair of morphisms
$$
	\Delta \colon
	(\Sigma, \sigma, \gamma) \lto
	(\Sigma, \sigma, \gamma) \otimes(\Sigma, \sigma,
	\gamma),\quad
	\epsilon \colon (\Sigma, \sigma, \gamma) \lto
	\mathbf 1
$$
is a pair of natural transformations $ \Delta \colon \Sigma
\lto \Sigma \Sigma$ and $ \varepsilon \colon \Sigma
\lto \mathbf 1$ that are compatible
with the distributive laws $\sigma$ and $\gamma$.
This gives us the
following characterisation of comonoids in $\mathcal F(\chi)$.
\begin{prop}\label{chi-comonoids}
A factorisation $(\Sigma, \sigma, \gamma)$ is a comonoid in
$\mathcal F(\chi)$ if and only if $\Sigma$ is part of a comonad and
$\sigma, \gamma$ are distributive laws of comonads.
\end{prop}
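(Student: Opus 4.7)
The statement is essentially a definitional unpacking, so my plan is to set up a dictionary between the two sides and observe that every piece of data/axiom on one side matches exactly one piece on the other.

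First, I would spell out what a comonoid structure on $(\Sigma, \sigma, \gamma)$ in $\mathcal F(\chi)$ really consists of. By the paragraph preceding the proposition, the comultiplication and counit are pairs $\Delta\colon (\Sigma,\sigma,\gamma)\to (\Sigma,\sigma,\gamma)^{\otimes 2}$ and $\epsilon\colon (\Sigma,\sigma,\gamma)\to \mathbf 1$, which in the underlying category $[\ma,\ma]$ are just natural transformations $\Delta\colon \Sigma\to \Sigma\Sigma$ and $\epsilon\colon \Sigma\to \id_\ma$. Since the monoidal unit of $\mathcal F(\chi)$ is $(\id_\ma,\id_T,\id_C)$ and the tensor product acts as composition on underlying endofunctors, the coassociativity and counit axioms in $\mathcal F(\chi)$ reduce verbatim to $\Sigma\Delta\circ\Delta=\Delta\Sigma\circ\Delta$ and $\epsilon\Sigma\circ\Delta=\id_\Sigma=\Sigma\epsilon\circ\Delta$, i.e.\ the axioms making $\mathbb\Sigma=(\Sigma,\Delta,\epsilon)$ a comonad on $\ma$.

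Next I would translate the requirement that $\Delta$ and $\epsilon$ be morphisms in $\mathcal F(\chi)$. The compatibility of $\Delta$ with $T$ unpacks, using the explicit formula $\sigma{\otimes}\sigma=\Sigma\sigma\circ\sigma\Sigma$ for the distributive law on $(\Sigma,\sigma,\gamma)^{\otimes 2}$, to the equation $\Sigma\sigma\circ\sigma\Sigma\circ T\Delta=\Delta T\circ\sigma$, and the compatibility of $\epsilon$ with $T$ unpacks to $T\epsilon=\epsilon T\circ\sigma$. These are precisely the two axioms expressing that $\sigma\colon T\Sigma\to\Sigma T$ is compatible with the comonad structure on $\Sigma$, so combined with the hypothesis that $\sigma\colon\mathbb T\to\Sigma$ is already a distributive law (built into the notion of factorisation), they say exactly that $\sigma\colon\mathbb T\to\mathbb\Sigma$ is a distributive law of comonads. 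The same argument for $C$ in place of $T$ shows that the compatibility of $\Delta,\epsilon$ with $C$ is equivalent to $\gamma\colon\mathbb\Sigma\to\mathbb C$ being a distributive law of comonads.

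Putting these dictionaries together proves both implications simultaneously: given a comonoid structure, the underlying data $(\Delta,\epsilon)$ yield a comonad $\mathbb\Sigma$ and the morphism axioms upgrade $\sigma,\gamma$ to comonad distributive laws; conversely, given a comonad $\mathbb\Sigma$ with $\sigma,\gamma$ comonad distributive laws, the comonad data $\Delta,\epsilon$ are automatically morphisms in $\mathcal F(\chi)$ and satisfy the comonoid axioms.

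There is no genuine obstacle; the only thing to be careful about is bookkeeping the horizontal/vertical composition conventions so that the explicit formula for $\sigma{\otimes}\sigma$ and $\gamma{\otimes}\gamma$ in $(\Sigma,\sigma,\gamma)^{\otimes 2}$ is used correctly when rewriting the hexagon/square diagrams of Section~\ref{factdef} into the standard comonad-distributive-law axioms.
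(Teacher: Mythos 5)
Your proof is correct and follows exactly the route the paper intends: the paper offers no separate argument beyond the remark preceding the proposition that $\Delta$ and $\epsilon$ are natural transformations compatible with $\sigma$ and $\gamma$, and your unpacking of the morphism conditions (using $\Sigma\sigma\circ\sigma\Sigma$ and $\gamma\Sigma\circ\Sigma\gamma$ for the tensor square, and $\id_T$, $\id_C$ for the unit) into the comonad distributive law axioms is precisely that definitional dictionary, spelled out in more detail than the paper itself.
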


Dually, a factorisation $(\Sigma, \sigma, \gamma)$ is a monoid in $\mathcal F(\chi)$ if and only if $\Sigma$ is part of a monad and $\sigma,\gamma$ are mixed distributive laws between monads and comonads.

\begin{cor}
Let $\chi \colon \mathsf{id_\mathcal A} \lto \mathsf{id_\mathcal A}$ be the trivial
distributive law given by the identity.
Then $(T, \Delta, \epsilon)$ is a comonad on $\mathcal A$ if and only if
$(T, \mathsf{id}_T, \mathsf{id}_T)$ is a comonoid in $\mathcal F(\chi)$,
and $(B, \mu, \eta)$ is a monad on $\mathcal A$ if and only if $(B, \mathsf{id}_B, \mathsf{id}_B)$ is a monoid in $\mathcal F(\chi)$.
\end{cor}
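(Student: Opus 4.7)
The plan is to derive the corollary as an immediate specialisation of Proposition~\ref{chi-comonoids} to the trivial case $\mathbb T = \mathbb C = (\mathsf{id}_\mathcal A, \mathsf{id}, \mathsf{id})$ with $\chi = \mathsf{id}$. First I would observe that under this choice, a triple $(\Sigma,\sigma,\gamma)$ as in Definition of $\mathcal F(\chi)$ consists of an endofunctor $\Sigma$ together with two natural transformations $\sigma,\gamma \colon \Sigma \lto \Sigma$ (since the functors $\mathsf{id}_\mathcal A \Sigma$ and $\Sigma \mathsf{id}_\mathcal A$ both equal $\Sigma$). In particular $(T, \mathsf{id}_T, \mathsf{id}_T)$ is a legitimate object of $\mathcal F(\chi)$: the distributive law axioms of Section~\ref{distributive} collapse, because each diagram involves only horizontal composites with identity natural transformations on $\mathsf{id}_\mathcal A$, and similarly the Yang-Baxter hexagon of Section~\ref{factdef} collapses to the identity on $T$. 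So the object exists unconditionally, and it only remains to decide when it is a comonoid.

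Next I would apply Proposition~\ref{chi-comonoids}. It asserts that $(T,\mathsf{id}_T,\mathsf{id}_T)$ carries a comonoid structure in $\mathcal F(\chi)$ precisely when $T$ is part of a comonad $(T,\Delta,\epsilon)$ \emph{and} the transformations $\mathsf{id}_T, \mathsf{id}_T$ are distributive laws \emph{of comonads}, i.e.\ compatible with the comonoid structures on both sides. Because one side is the trivial comonad $\mathsf{id}_\mathcal A$, these compatibility squares reduce to the assertion that $\Delta$ and $\epsilon$ are natural with respect to the identity on $T$ — which is tautological. Hence the only remaining constraint is that $T$ carries some comonad structure, establishing the first equivalence. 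The second equivalence, for monads, follows by the dual of Proposition~\ref{chi-comonoids} mentioned immediately after its statement, applied to the trivial mixed distributive law.

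There is no genuine obstacle here; the only point requiring a line of bookkeeping is verifying that, under $T = C = \mathsf{id}_\mathcal A$ and $\chi = \mathsf{id}$, the distributive-law pentagon/triangle axioms and the Yang-Baxter hexagon are vacuous, and that compatibility of the identity with the trivial (co)monad structure amounts to naturality with respect to $\mathsf{id}_T$. Both are immediate from the convention, recalled in the introduction, that horizontal composition with identity natural transformations is denoted by whiskering.
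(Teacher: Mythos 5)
Your proof is correct and takes essentially the same route as the paper, which states the corollary without proof precisely because it is the immediate specialisation of Proposition~\ref{chi-comonoids} (and its dual) to the trivial distributive law, exactly as you argue. One small caveat: your opening description of a general object of $\mathcal F(\chi)$ as an arbitrary pair $\sigma,\gamma\colon\Sigma\lto\Sigma$ is slightly off (the counit axiom forces $\sigma=\gamma=\mathsf{id}_\Sigma$), but this is harmless since you only need the specific object $(T,\mathsf{id}_T,\mathsf{id}_T)$, which you verify directly.
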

\subsection{Module categories for $\mathcal F(\chi)$}\label{modulecat}
We define a functor $\rhd \colon \mathcal F (\chi) \times \mathcal R (\chi) \lto \mathcal R(\chi)$ as follows. On objects we define
$$
(\Sigma, \sigma, \gamma) \rhd (M, \mathcal X, \rho) = (\Sigma M, \mathcal X, \gamma M \circ \Sigma\rho \circ \sigma M)
$$
and on morphisms we define $\alpha \rhd ({\varphi},F)$ to be the pair $(\alpha {\varphi}, F)$.
\begin{prop}\label{right well defined}
The assignment $\rhd$ is a well-defined functor.
\end{prop}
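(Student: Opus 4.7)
The plan is to verify three things in turn: that the formula on objects yields a right $\chi$-coalgebra, that the formula on morphisms yields a morphism of right $\chi$-coalgebras, and that $\rhd$ respects composition and identities. Throughout, set $\rho' := \gamma M \circ \Sigma \rho \circ \sigma M$ for the prospective new coaction $T\Sigma M \lto C\Sigma M$.

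First I would dispatch the counit axiom for $\rho'$, which collapses by a three-step reduction: whiskering the counit condition for $\gamma$ with $M$ turns $\epsilon^C \Sigma M \circ \gamma M$ into $\Sigma \epsilon^C M$, the counit axiom for $\rho$ rewrites $\Sigma \epsilon^C M \circ \Sigma \rho$ as $\Sigma \epsilon^T M$, and finally the counit condition for $\sigma$ collapses $\Sigma \epsilon^T M \circ \sigma M$ to $\epsilon^T \Sigma M$. The coassociativity axiom for $\rho'$ is the main calculation. My strategy is to expand the left-hand side $\Delta^C \Sigma M \circ \rho'$ using coassociativity of $\gamma$ whiskered with $M$, then feed the resulting factor $\Delta^C M \circ \rho$ into the coassociativity axiom for $\rho$, and finally apply coassociativity of $\sigma$ to the remaining $\Sigma \Delta^T M \circ \sigma M$. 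The right-hand side $C\rho' \circ \chi_{\Sigma M} \circ T\rho' \circ \Delta^T \Sigma M$ I would expand by unfolding the two copies of $\rho'$. After cancelling $C\gamma M$ on the outside left and $T\sigma M \circ \Delta^T \Sigma M$ on the outside right, the desired equality reduces to an identity between two five-fold composites $T\Sigma T M \lto C \Sigma C M$, which I would reconcile by one application of the Yang-Baxter hexagon $\gamma T \circ \Sigma \chi \circ \sigma C = C\sigma \circ \chi \Sigma \circ T\gamma$ whiskered with $M$, together with the naturality squares of $\gamma$ and $\sigma$ applied to the morphism $\rho$.

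For the morphism axiom of $(\alpha \varphi, F)$, I would begin from $C(\alpha \varphi) \circ \rho''$ (with $\rho''$ the new coaction on $\Sigma M$), decompose $C(\alpha \varphi) = C\Sigma' \varphi \circ C\alpha M$, and apply in sequence the compatibility of $\alpha$ with $\gamma$, the naturality of $\alpha$ on $\rho$, and the compatibility of $\alpha$ with $\sigma$. Starting instead from $\rho''' F \circ T(\alpha \varphi)$, decomposing $T(\alpha \varphi) = T\Sigma' \varphi \circ T\alpha M$, and using the naturality of $\sigma'$ and $\gamma'$ on $\varphi$ together with the morphism axiom $\rho' F \circ T\varphi = C\varphi \circ \rho$ for $(\varphi, F)$, the expression reduces to the same intermediate form $C\Sigma' \varphi \circ \gamma' M \circ \Sigma' \rho \circ \sigma' M \circ T\alpha M$. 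Functoriality is then routine: the identity case is immediate, and preservation of composition reduces to the interchange law for horizontal and vertical composition of natural transformations.

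The principal obstacle is the coassociativity verification for $\rho'$, since both sides are eight-term composites and several independent whiskerings must be tracked simultaneously. Once the chase is organised around the Yang-Baxter hexagon whiskered with $M$, however, the remaining commutativities are only naturality squares, so the proof is a structured diagram chase rather than anything conceptually new.
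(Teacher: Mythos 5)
Your proof is correct and is essentially the paper's argument: the same ingredients (the counit and coassociativity axioms of the distributive laws $\sigma,\gamma$, the $\chi$-coalgebra axioms of $\rho$, one application of the Yang--Baxter hexagon, naturality of $\sigma,\gamma,\alpha$, the morphism axiom of $(\varphi,F)$, and the interchange law) appear in the same roles, only written as equational rewriting instead of the paper's pasted diagrams. No substantive difference, so nothing further to add.
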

\begin{proof}
Consider the diagram
$$ 
\xymatrix@C=3.5em{
T \Sigma M \ar[d]_-{\sigma M} \ar[r]^{\Delta^T \Sigma M} & TT\Sigma M \ar[r]^-{T \sigma M} & T\Sigma TM \ar[d]^-{\sigma{TM}} \ar[r]^-{T\Sigma\rho} & T \Sigma CM \ar[d]^-{\sigma CM} \ar[r]^-{T \gamma M} & TC\Sigma M \ar[d]^-{\chi \Sigma M} \\
\Sigma TM \ar[dd]_-{\Sigma \rho} \ar[rr]_-{\Sigma \Delta^TM} & & \Sigma TTM \ar[r]_-{\Sigma T\rho}  & \Sigma TCM \ar[d]^-{\Sigma \chi M} & CT\Sigma M \ar[d]^-{C } \\
& & & \Sigma CTM \ar[r]^-{\gamma{TM}} \ar[d]_-{\Sigma C \rho}& C\Sigma TM \ar[d]^-{C \Sigma \rho} \\
\Sigma CM \ar[d]_-{\gamma{CM}}\ar[rrr]_-{\Sigma \Delta^C M}& & & \Sigma CCM \ar[r]_-{\gamma {CM}}&  C \Sigma CM \ar[d]^-{C \gamma M} \\
C\Sigma M \ar[rrrr]_-{\Delta^C {\Sigma M}}& & & & CC \Sigma M
}
$$
The top-left and bottom rectangles commute by the distributive law axioms, the middle-left rectangle commutes because $(M, \mathcal X, \rho)$ is a right $\chi$-coalgebra, the top-right diagram commutes by the Yang-Baxter condition, and the remaining squares commute by naturality of $\sigma,\gamma$. Therefore, the outer rectangle commutes.

Consider the triangle
$$
\xymatrix{
T\Sigma M \ar[rrd]_-{\epsilon^T {\Sigma M}} \ar[r]^-{\sigma M} & \Sigma TM \ar[dr]^-{\Sigma \epsilon^T M} \ar[rr]^-{\Sigma \rho} & & \Sigma CM \ar[dl]_-{\Sigma \epsilon^C M} \ar[r]^-{\gamma M} & C \Sigma M \ar[dll]^-{\epsilon^C {\Sigma M}}\\
& & \Sigma M & &
}
$$
The middle triangle commutes because $(M, \mathcal X, \rho)$ is a right $\chi$-coalgebra, and the other two inner triangles commute by the distributive law axioms. Therefore, the outer triangle commutes. This shows that $\rhd$ is well-defined on objects.

Let $({\varphi},F) \colon (M,\mathcal X, \rho) \lto (M',\mathcal X', \rho')$ and $\alpha \colon (\Sigma, \sigma, \gamma) \lto (\Sigma', \sigma', \gamma')$ be morphisms of right $\chi$-coalgebras and factorisations, respectively. Consider the diagram
$$
\xymatrix{
T \Sigma M \ar[d]_-{\sigma M} \ar[r]^-{T \alpha M} & T \Sigma' M \ar[d]^-{\sigma M'} \ar[r]^-{T \Sigma' {\varphi}} & T \Sigma' M'F \ar[d]^-{\sigma' {M' F}} \\
\Sigma TM \ar[d]_-{\Sigma \rho} \ar[r]^-{\alpha {TM}} & \Sigma' TM \ar[d]^-{\Sigma' \rho} \ar[r]^-{\Sigma' T {\varphi}} & \Sigma' TM'F \ar[d]^-{\Sigma' \rho' F} \\
\Sigma CM \ar[r]_-{\alpha {CM}} & \Sigma' CM \ar[r]_-{\Sigma' C{\varphi}} & \Sigma' C M'F
}
$$
The top-left square commutes since $\alpha$ is
compatible with $T$, the top-right square commutes by
naturality of $\sigma$, the bottom-left square commutes
by naturality of $\alpha$, and the bottom-right square
commutes since $({\varphi},F)$ is a right
$\chi$-coalgebra morphism. Thus the outer square commutes, which shows that $\alpha \rhd ({\varphi},F)$ is a right $\chi$-coalgebra morphism.

It is clear that $\rhd$ respects identities and composition of morphisms (because the vertical and horizontal compositions of natural transformations are compatible with each other), 
so $\rhd$ is well-defined on morphisms.
\end{proof}

Dually, we also define a functor
$$
\lhd \colon \mathcal L(\chi) \times \mathcal F(\chi) \lto \mathcal  L(\chi).
$$

\begin{thm}\label{main theorem}
The category $\mathcal R(\chi)$ is a strict left module category for $\mathcal F(\chi)$, with left action given by the functor $\rhd$. Furthermore, the category $\mathcal L (\chi)$ is a strict right module category for $\mathcal F(\chi)$, with right action given by the functor $\lhd$.
\end{thm}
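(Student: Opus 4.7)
The plan is to verify the two strict axioms of a left module category for the functor $\rhd$, namely that $\mathbf 1 \rhd P = P$ and $(X \otimes Y) \rhd P = X \rhd (Y \rhd P)$, functorially in $X, Y, P$. Since Proposition~\ref{right well defined} already establishes that $\rhd$ is a well-defined functor---and in that proof the Yang-Baxter hexagon was used precisely to ensure that the composite structure map lands in $\mathcal R(\chi)$---no further diagrammatic reasoning is needed, and the theorem reduces to a formal substitution of definitions.

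For the unit axiom, substituting $\mathbf 1 = (\id_\mathcal A, \id_T, \id_C)$ gives on objects
$$
\mathbf 1 \rhd (M, \mathcal X, \rho) = (M, \mathcal X, \id_C M \circ \id_\mathcal A \rho \circ \id_T M) = (M, \mathcal X, \rho),
$$
and on morphisms $\id_\mathbf 1 \rhd (\varphi, F) = (\varphi, F)$, since the horizontal composite of any natural transformation with the identity of the identity functor is itself.

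For associativity I would expand both sides by direct substitution. Using the definitions of $\otimes$ and $\rhd$, both
$$
((\Sigma,\sigma,\gamma) \otimes (\Sigma',\sigma',\gamma')) \rhd (M, \mathcal X, \rho)
$$
and
$$
(\Sigma, \sigma, \gamma) \rhd ((\Sigma',\sigma',\gamma') \rhd (M, \mathcal X, \rho))
$$
simplify to
$$
\bigl(\Sigma\Sigma' M,\; \mathcal X,\; \gamma\Sigma' M \circ \Sigma\gamma' M \circ \Sigma\Sigma' \rho \circ \Sigma\sigma' M \circ \sigma\Sigma' M\bigr);
$$
the only nontrivial step is distributing $\Sigma$ over the three-fold composite structure map produced by the inner action, which is just the interchange law for horizontal composition. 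On morphisms, both $(\alpha \otimes \beta) \rhd (\varphi, F)$ and $\alpha \rhd (\beta \rhd (\varphi, F))$ equal $(\alpha\beta\varphi, F)$ by associativity of horizontal composition. All these identities are natural in their arguments, hence they hold strictly and functorially.

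The right module structure on $\mathcal L(\chi)$ is established by the completely dual argument. There is effectively no genuine obstacle: the one substantive point, that the composite structure map defines a right $\chi$-coalgebra (where the Yang-Baxter hexagon enters), was already handled in Proposition~\ref{right well defined}, and the module category axioms themselves are matters of formal bookkeeping once $\otimes$ and $\rhd$ are written out.
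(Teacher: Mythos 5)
Your proposal is correct and follows essentially the same route as the paper: expand both sides of the associativity condition using the definitions of $\otimes$ and $\rhd$, observe that both equal $(\Sigma\Sigma' M, \mathcal X, \gamma\Sigma' M \circ \Sigma\gamma' M \circ \Sigma\Sigma'\rho \circ \Sigma\sigma' M \circ \sigma\Sigma' M)$, note the unit acts trivially, and appeal to Proposition~\ref{right well defined} for well-definedness, with the $\mathcal L(\chi)$ statement handled dually. Your explicit check on morphisms merely spells out what the paper summarises as ``functorially equal''.
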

\begin{proof}
We will prove only the first statement, as the second follows by a similar argument. It is clear that $\mathbf 1$ acts as the identity. Let $(\Sigma, \sigma, \gamma),(\Sigma', \sigma', \gamma')$ be two factorisations and let $(M, \mathcal X, \rho)$ be a right $\chi$-coalgebra. We have
\begin{align*}
&((\Sigma, \sigma, \gamma) \otimes (\Sigma', \sigma', \gamma')) \rhd (M, \mathcal X, \rho)\\
&= (\Sigma \Sigma', \Sigma \sigma' \circ \sigma {\Sigma'}, \gamma {\Sigma'} \circ \Sigma \gamma') \rhd (M,\mathcal X,\rho) \\
&=( \Sigma \Sigma' M, \mathcal X, \gamma {\Sigma' M} \circ \Sigma \gamma' M \circ \Sigma \Sigma'\rho \circ \Sigma \sigma' M \circ \sigma {\Sigma'M})
\end{align*}
and
\begin{align*}
&(\Sigma, \sigma, \gamma) \rhd ((\Sigma', \sigma', \gamma') \rhd (M, \mathcal X, \rho))\\
&= (\Sigma, \sigma, \gamma) \rhd  (\Sigma M, \mathcal X, \gamma M \circ \Sigma\rho \circ \sigma M)\\ 
&=( \Sigma \Sigma' M, \mathcal X, \gamma {\Sigma' M} \circ \Sigma \gamma' M \circ \Sigma \Sigma'\rho \circ \Sigma \sigma' M \circ \sigma {\Sigma'M})
\end{align*}
These are functorially equal, so $\rhd$ is a left action of $\mathcal F(\chi)$.
\end{proof}
\begin{cor}\label{korollar}
The category $\mathcal S(\chi)$ is a strict bimodule category for $\mathcal F(\chi)$.
\end{cor}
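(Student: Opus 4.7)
The plan is to observe that Corollary~\ref{korollar} is an immediate combination of two earlier results already established in the paper, so essentially no new computation is required.

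First, I would invoke Theorem~\ref{main theorem} to obtain the two one-sided actions: it tells us that $(\mathcal R(\chi), \rhd)$ is a strict left $\mathcal F(\chi)$-module category and that $(\mathcal L(\chi), \lhd)$ is a strict right $\mathcal F(\chi)$-module category. These are the only ingredients needed as input.

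Next, I would apply Lemma~\ref{product bimodule} with $\mathcal M = \mathcal F(\chi)$, $\mathcal C = \mathcal R(\chi)$, and $\mathcal D = \mathcal L(\chi)$. This immediately gives that the product category $\mathcal R(\chi)\times \mathcal L(\chi)$, which is by definition $\mathcal S(\chi)$, carries a strict bimodule structure with actions
\[
(\Sigma,\sigma,\gamma) \rhd \bigl((M,\mathcal X,\rho),(N,\mathcal Y,\lambda)\bigr) \lhd (\Sigma',\sigma',\gamma') = \bigl((\Sigma,\sigma,\gamma)\rhd (M,\mathcal X,\rho),\,(N,\mathcal Y,\lambda)\lhd (\Sigma',\sigma',\gamma')\bigr),
\]
and that the commutation law $X\rhd (P\lhd Y) = (X\rhd P)\lhd Y$ holds functorially in all variables, simply because the left and right actions act on different components of the product.

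There is no real obstacle here: the only thing one might want to double-check is that the functoriality and strictness conditions in Lemma~\ref{product bimodule} are genuinely preserved by the specific actions $\rhd$ and $\lhd$ defined in Section~\ref{modulecat}, but this is automatic from the componentwise nature of the product construction. Thus the corollary reduces to citing Theorem~\ref{main theorem} and Lemma~\ref{product bimodule}.
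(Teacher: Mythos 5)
Your proposal is correct and matches the paper's proof exactly: the paper also deduces the corollary by applying Lemma~\ref{product bimodule} to the two one-sided module structures provided by Theorem~\ref{main theorem}, with $\mathcal S(\chi) = \mathcal R(\chi)\times\mathcal L(\chi)$ by definition. No further checking is needed.
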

\begin{proof}
This follows immediately by applying Lemma~\ref{product bimodule} to Theorem~\ref{main theorem}.
\end{proof}
\section{Examples}
\subsection{Flat connections}\label{flat}

Let $\mathbb B = (B, \mu, \eta)$ be a monad on a category $\mathcal C$. The forgetful functor $U \colon \mathcal C^\mathbb B \lto \mathcal C$ has a left adjoint $F$ defined by
$$
F(X, \alpha) = (BX, \mu_X), \quad \quad \quad F(f) = Bf.
$$
The unit of this adjunction is given by $\eta$ and the counit is $\tilde\epsilon_{(X, \alpha)} = \alpha$. Let $\tilde B$ denote the functor $FU$ and let $\tilde \Delta$ denote the natural transformation $F \eta\, U$. The adjunction gives rise to a comonad $\tilde{\mathbb B} = (\tilde B, \tilde \Delta, \tilde\epsilon)$, which is the same as the comonad discussed in Section~\ref{Eilenberg-Moore}.

Let $\Sigma \colon \C^\mathbb B \lto \C^\mathbb B$ be an endofunctor. For every object $(X, \alpha)$ in $\mathcal C^\mathbb B$ there are natural isomorphisms
$$
\mathcal C^\mathbb B ( \tilde B \Sigma (X,\alpha), \Sigma \tilde B (X, \alpha) ) \cong \mathcal C ( U\Sigma (X, \alpha) , U \Sigma \tilde B (X, \alpha) )
$$
given by the adjunction, so there is a one-to-one correspondence between natural transformations $\sigma \colon \tilde B \Sigma \lto \Sigma \tilde B$  and natural transformations $\nabla \colon U \Sigma \lto U \Sigma \tilde B$. In fact, $\sigma$ is a distributive law if and only if the diagrams
$$
\xymatrix{
U\Sigma \ar[r]^-{\nabla} \ar[d]_-\nabla & U\Sigma \tilde B \ar[d]^-{\nabla \tilde B} \\
U \Sigma \tilde B \ar[r]_-{U \Sigma \tilde \Delta} & U \Sigma \tilde B \tilde B
}
\quad\quad\quad
\xymatrix{
U \Sigma \ar[r]^-\nabla \ar@{=}[dr] & U \Sigma \tilde B \ar[d]^-{U \Sigma \tilde \epsilon} \\
& U \Sigma
}
$$
commute.
\begin{defn}
We say that the natural transformation $\sigma$ is a \emph{connection} if
$\tilde \varepsilon $ is compatible with $\sigma$,
i.e.\ the second diagram above commutes for the
corresponding natural transformation $\nabla$. We say
that a connection $\sigma$ is \emph{flat} if $\tilde
\Delta $ is compatible with $\sigma$, i.e.\ $\sigma$ is a distributive law, or equivalently, both diagrams above commute.
\end{defn}

The terminology is motivated by the special case
discussed in detail in the following section.

\subsection{$(A,A)$-bimodules}\label{connectexample}
Let $k$  be a commutative ring and let $A$ be a unital associative algebra over $k$. Let $\mathcal C = \amod$ be the category of left $A$-modules. The functor $B = - \otimes_k A \colon \C \lto \C$, together with the natural transformations
\begin{align*}
\mu_M \colon M \otimes_k A \otimes_k A & \lto M \otimes_k A & \eta_M \colon M  &\lto M \otimes_k A \\
m \otimes a \otimes b & \lmapsto m \otimes ab & m &\lmapsto m \otimes 1
\end{align*}
defines a monad $\mathbb B$ on $\mathcal C$ which lifts to a comonad
$\tilde {\mathbb B}$ on $\mathcal C^\mathbb B$. The
latter is isomorphic to the
category of $(A,A)$-bimodules (with symmetric action of $k$).

The functor $D= A \otimes_k - \colon \C \lto \C$, together with the natural transformations
\begin{align*}
\Delta_M \colon A \otimes_k M &\lto A \otimes_k A \otimes_k M & \epsilon_M \colon A \otimes_k M & \lto M \\
a \otimes m &\lmapsto  a \otimes 1 \otimes m & a \otimes m & \lmapsto am
\end{align*}
defines a comonad $\mathbb D$ on $\mathcal C$. There is a mixed distributive law $\theta \colon \mathbb B \lto \mathbb D$ given by rebracketing on components
$$
\theta_M \colon (A \otimes_k M) \otimes_k A \lto A \otimes_k (M \otimes_k A)
$$
so this lifts to a comonad distributive law $\theta \colon \tilde{\mathbb B} \lto \tilde{\mathbb D}$.

Let $N$ be an $(A,A)$-bimodule and $\Sigma \colon
\C^\mathbb{B} \lto \C^\mathbb B$ be the functor defined by $\Sigma (M) = M \otimes_A N$. We have that $\Sigma \tilde D = \tilde D \Sigma$ so the identity $\mathsf{id}_{\Sigma \tilde D} \colon \Sigma \lto \mathbb D$ is a distributive law.

In this case, the components of a natural transformation 
$\nabla \colon U \Sigma \lto U \Sigma \tilde B$ are given by a left $A$-linear map
\begin{align*}
  \nabla_M \colon M \otimes_A N & \lto (M \otimes_k A) \otimes_A N \cong M \otimes_k N
\end{align*}
The corresponding natural transformation 
$\sigma \colon \tilde{\mathbb B} \lto \Sigma$ is given by
\begin{align*}
\sigma_M \colon (M \otimes_A N) \otimes_k A &\lto (M \otimes_k A) \otimes_A N \cong M \otimes_k N \\
 (m \otimes_A n) \otimes b &\lmapsto \nabla_M (m \otimes_A n)b.
\end{align*}
The natural transformation $\nabla$ defines a
connection if and only if each $\nabla_M$ splits the
quotient map $M \otimes_k N \lto M \otimes_A N$.
Taking $M=A$ yields an $A$-linear splitting of the action $A
\otimes _k N \lto N$, so $N$ is $k$-relative
projective. Conversely, given a splitting
$n \mapsto n_{(-1)} \otimes n_{(0)}$ of the action, we
obtain $\nabla_M$ as
$\nabla_M(m \otimes_A n)=mn_{(-1)} \otimes n_{(0)}$.

Thus we have:

\begin{prop}
The functor $\Sigma$ admits a connection $ \sigma $ if and only if $N$ is
$k$-relative projective as a left $A$-module.
\end{prop}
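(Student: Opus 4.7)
The plan is to exploit the reformulation of connections in terms of $\nabla$ given in Section~\ref{flat}: a connection $\sigma$ corresponds to a natural transformation $\nabla \colon U\Sigma \lto U\Sigma\tilde B$ satisfying $U\Sigma\tilde\epsilon \circ \nabla = \id$. In the present setting, $U\Sigma(M, \alpha)=M\otimes_A N$ and $U\Sigma\tilde B(M,\alpha)\cong M\otimes_k N$ under the canonical isomorphism $(M\otimes_k A)\otimes_A N\cong M\otimes_k N$, and $U\Sigma\tilde\epsilon$ becomes the canonical surjection $q_M \colon M\otimes_k N \lto M\otimes_A N$. So a connection is precisely a natural family of left $A$-linear maps $\nabla_M$ splitting $q_M$, as asserted in the paragraph just before the statement.

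For the forward direction, I would specialise $M=A$ and use the iso $A\otimes_A N\cong N$: the component $\nabla_A$ then produces an $A$-linear map $N \lto A\otimes_k N$ which, by the splitting property, is a section of the action map $A\otimes_k N \lto N$. Since $A\otimes_k N$ is the free $A$-module on the $k$-module $N$, exhibiting $N$ as a retract of such a module shows $N$ is $k$-relative projective. (This is of course the standard characterisation; I would just invoke it.)

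For the converse, I would take a left $A$-linear splitting $s\colon N\lto A\otimes_k N$, written $n\mapsto n_{(-1)}\otimes n_{(0)}$, of the action, and define
\[
\nabla_M(m\otimes_A n)=mn_{(-1)}\otimes n_{(0)}.
\]
The four things to verify are routine: (i) well-definedness on the balanced tensor product, which uses exactly the $A$-linearity of $s$, i.e.\ $s(an)=a n_{(-1)}\otimes n_{(0)}$; (ii) left $A$-linearity in $m$, which is immediate from the formula; (iii) naturality in $M$, again immediate; and (iv) the splitting identity $q_M\circ\nabla_M=\id$, which reduces on generators to $m n_{(-1)} n_{(0)}=mn$, i.e.\ to $s$ being a section of the action.

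The only minor subtlety is keeping the bookkeeping between $\sigma$ and $\nabla$ straight, but since the equivalence between connections and $\nabla$-splittings has already been established in Section~\ref{flat}, no genuine obstacle remains; the proposition is essentially a translation of ``$k$-relative projective'' into the language of connections, carried out first on the universal object $M=A$ and then extended naturally in $M$.
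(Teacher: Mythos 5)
Your proposal is correct and follows essentially the same route as the paper: identify connections with natural $A$-linear splittings $\nabla_M$ of the quotient maps $M\otimes_k N \to M\otimes_A N$, specialise to $M=A$ to get a splitting of the action $A\otimes_k N\to N$ (hence relative projectivity), and conversely define $\nabla_M(m\otimes_A n)=mn_{(-1)}\otimes n_{(0)}$ from a splitting of the action. Your explicit verification of well-definedness and naturality only fills in details the paper leaves implicit.
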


Composing $\nabla_A$ with the noncommutative De Rham
differential
$$
	\operatorname d \colon A \lto
	\Omega^1_{A,k},\quad
	a \lmapsto 1 \otimes a-a \otimes 1
$$
gives the notion of connection in noncommutative
geometry \cite[III.3.5]{MR1303779}.

If $N$ is not just $k$-relative projective but
$k$-relative free, i.e.\
$N \cong A \otimes_k V$ as left $A$-modules, for some
$k$-module $V$, then the assignment $\nabla_M ( m \otimes_A (a \otimes v) ) = ma \otimes (1 \otimes v)$ defines a flat connection. 
Thus we have:
\begin{prop}\label{bimoduletwist}
The triple
$(\Sigma, \sigma, \mathsf{id}_{\Sigma \tilde D})$
is a factorisation of $\theta$.
\end{prop}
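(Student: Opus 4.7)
The plan is to verify the three conditions defining a factorisation of $\theta$: that $\sigma \colon \tilde{\mathbb B} \lto \Sigma$ is a distributive law, that $\mathsf{id}_{\Sigma \tilde D} \colon \Sigma \lto \tilde{\mathbb D}$ is a distributive law, and that the Yang-Baxter hexagon for the pair $(\sigma, \mathsf{id})$ over $\theta$ commutes.

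For the first, I would invoke the bijection set up in Section~\ref{flat} between distributive laws $\sigma \colon \tilde{\mathbb B} \lto \Sigma$ and flat connections $\nabla \colon U\Sigma \lto U\Sigma \tilde B$, thereby reducing the question to checking the connection and flatness axioms for the explicit formula $\nabla_M(m \otimes_A (a \otimes v)) = ma \otimes_k (1 \otimes v)$. The connection (splitting) axiom is immediate from the $A$-balance $ma \otimes_A (1 \otimes v) = m \otimes_A (a \otimes v)$; the flatness axiom amounts to observing that both $\nabla \tilde B \circ \nabla$ and $U\Sigma \tilde \Delta \circ \nabla$ send $m \otimes_A (a \otimes v)$ to $ma \otimes_k 1 \otimes_k (1 \otimes v)$.

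For the second, since $\Sigma$ and $\tilde D$ commute as endofunctors of $\mathcal C^\mathbb B$ via the canonical identification $(A \otimes_k M) \otimes_A N = A \otimes_k (M \otimes_A N)$, the two defining diagrams for a distributive law $\Sigma \lto \tilde{\mathbb D}$ collapse, under this identification, to the tautological identities $\Sigma \tilde \Delta = \tilde \Delta \Sigma$ and $\Sigma \tilde \epsilon = \tilde \epsilon \Sigma$, so $\mathsf{id}_{\Sigma \tilde D}$ is indeed a distributive law (this is precisely what was already remarked in the text preceding the proposition).

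Finally, for the Yang-Baxter hexagon, the specialisation $\gamma = \mathsf{id}$ forces the two edges $\gamma T$ and $T \gamma$ to become identifications, so the hexagon collapses to the single equation
$$
\Sigma \theta \circ \sigma \tilde D \;=\; \tilde D \sigma \circ \theta \Sigma
$$
of natural transformations $\tilde B \Sigma \tilde D \lto \tilde D \Sigma \tilde B$. I expect this to be the main obstacle, since it requires carefully unwinding $\theta$ (which is pure rebracketing of $\otimes_k$ across $\otimes_A$) and $\sigma$ (given by right-multiplying a $\nabla$-output by an element of $A$) on a general element. However, all maps involved are explicit and $k$-linear, so the verification reduces to tracing an element of the form $(a \otimes_k m) \otimes_A (a' \otimes v) \otimes_k b$ around both sides and checking that both composites return the same tensor after the canonical identifications; the $k$-relative freeness of $N$ is what makes this go through cleanly, by ensuring that $\sigma$ never interacts non-trivially with the left-$A$-action lurking in $N$.
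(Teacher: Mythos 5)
Your proposal is correct and follows essentially the same route as the paper, which simply records that $\nabla_M(m \otimes_A (a \otimes v)) = ma \otimes (1 \otimes v)$ is a flat connection and that $\mathsf{id}_{\Sigma \tilde D}$ is a distributive law because $\Sigma \tilde D = \tilde D \Sigma$, leaving the remaining verification implicit. Your reduction of the Yang--Baxter hexagon to $\Sigma\theta \circ \sigma\tilde D = \tilde D\sigma \circ \theta\Sigma$ is the right one, and the element chase does close up as you expect: writing $(1 \otimes v)b = \sum_i c_i \otimes v_i$ in $N$, both composites send $((a \otimes m) \otimes_A (a' \otimes v)) \otimes b$ to $\sum_i \bigl(a \otimes (ma' \otimes c_i)\bigr) \otimes_A (1 \otimes v_i)$.
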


In particular, let $ \sigma \colon A \lto A$ be an
algebra map and $N=A_\sigma$, the $(A,A)$-bimodule
which is $A$ as a left $A$-module with right action of $a
\in A$ given by right multiplication by $ \sigma
(a)$. 
Then we have 
$\Sigma (M) = M \otimes_A A_\sigma \cong M_\sigma $. 
Since $A_\sigma$ is free as a left $A$-module we get a factorisation 
$(\Sigma, \sigma,\id_{\Sigma \tilde D})$ by Proposition~\ref{bimoduletwist}, 
where $\sigma \colon \tilde{\mathbb B} \lto \Sigma$ is the flat connection defined on components by
\begin{align*}
\sigma_M \colon M_\sigma \otimes_k A \lto (M \otimes_k A)_\sigma \\
m \otimes a \lmapsto m \otimes \sigma(a).
\end{align*}
Note that we use $ \sigma $ to denote both the algebra
map and the flat connection. 

From the general theory developed in
Section~\ref{results} we obtain therefore an action of
the group of endomorphisms of $A$ on the category of
admissible data for $\theta$. In particular, we can act
on the standard cyclic object associated to $A$
\cite{MR823176,MR695483}, which corresponds to
the following admissible datum.
  
Consider $A$ as a functor $A \colon \{ * \} \lto \C^\mathbb B$ from the one-morphism category to the category of $(A,A)$-bimodules. Since $\tilde B A = \tilde D A = A \otimes_k A$ we have a natural transformation $\rho = \mathsf{id}_{A \otimes_k A} \colon \tilde B A \lto \tilde D A$. The triple $(A,  \{*\},\rho)$ is a right $\theta$-coalgebra.

Considering $(A,A)$-bimodules as either left or
right   
$A^e := A \otimes_k A^\op$-modules, we view the zeroth
Hochschild homology as a functor
$H = - \otimes_{A^e} A \colon 
\mathcal C ^\mathbb B \lto k\mbox{-}\mathsf{Mod}$. 
We define a natural transformation 
$\lambda \colon H \tilde D  \lto H \tilde B$ by
\begin{align*}
\lambda_M\colon ( A\otimes_k M) \otimes_{A^e} A &\lto (M \otimes_k A )\otimes_{A^e} A \cong M \\
 (a \otimes m) \otimes_{A^e} b  &\lmapsto mba
\end{align*}
The pair $(H , k\mbox{-}\mathsf{Mod},\lambda)$ is a
left $\theta$-coalgebra, and the duplicial $k$-module 
associated to the admissible datum
$(A,\{*\},\rho,H,k\mbox{-}\mathsf{Mod},\lambda)$ is
indeed the cyclic object defining the cyclic homology
$HC(A)$.

The cyclic homology of the duplicial object associated to the 
admissible datum
 $$
 (\Sigma, \sigma, \mathsf{id}_{\Sigma \tilde D}) 
\rhd (A, \{*\}, \rho, H, k\mbox{-}\mathsf{Mod}, \lambda) = 
(A_\sigma, \{*\}, \rho \circ \sigma_A, H, k\mbox{-}\mathsf{Mod}, \lambda)
 $$
 is $HC^\sigma(A)$, the $\sigma$-twisted cyclic homology of $A$. 
This was first considered in~\cite{MR1943179} and is discussed in 
Section 5.2 of~\cite{MR2803876} in the context of Hopf algebroids.
Thus the action of the category of factorisations
generalises this twisting procedure. 

\subsection{Mixed factorisations}\label{mixed}
Let $ \mathbb B = (B, \mu, \eta)$ be a monad on a category $\C$ and let $\Sigma \colon \C^\mathbb B \lto \C^\mathbb B$ be a functor. In this section, we consider a special case of Section~\ref{flat}: when the functor $\Sigma$ is a lift of a functor $S \colon \mathcal C \lto \mathcal C$, i.e.\ there is a commutative diagram
$$
\xymatrix{
\C^\mathbb B \ar[d]_-U \ar[r]^-\Sigma & \C^\mathbb B \ar[d]^-U \\
\C \ar[r]_-{S} & \C
}
$$
Let $\mathbb D$ be a comonad on $\mathcal C$ and let $\theta \colon \mathbb B \lto \mathbb D$ be a distributive law. Distributive laws $\gamma \colon S \lto \mathbb D$ lift to give distributive laws $\gamma \colon \Sigma \lto \tilde{ \mathbb D}$, and if $\gamma$ is part of a factorisation $(S, \sigma, \gamma)$ of $\theta \colon \mathbb B \lto \mathbb D$ then we get a factorisation $(\Sigma, \sigma, \gamma)$ of $\theta \colon \tilde{\mathbb B} \lto \tilde{\mathbb D}$.

We consider three special cases of this construction.
The distributive laws used therein are
instances of one defined on the category of right $U$-modules, 
where $U$ is a left Hopf algebroid, which is defined
and discussed in~\cite{1}.

\begin{exa}
Suppose that $\sigma \colon \mathbb B \lto \mathbb B$ is a monad morphism
which is compatible with $\theta$; that is $\sigma \colon B \lto B$ is a
natural transformation such that the three diagrams
$$
	\xymatrix{
	BB \ar[d]_-\mu \ar[r]^-{\sigma \sigma} & BB \ar[d]^-\mu \\
	B \ar[r]_-\sigma & B
	}\quad\quad\quad
	\xymatrix{
	\mathsf{id}_\mathcal{C} \ar[dr]_-\eta \ar[r]^-\eta & B \ar[d]^-\sigma \\
& B
	}\quad\quad\quad
	\xymatrix{
	BD \ar[r]^-{\sigma D} \ar[d]_-\theta & BD \ar[d]^\theta \\
	DB \ar[r]_-{D\sigma} & DB
	}
$$
commute. The first two diagrams say that $\sigma \colon
\mathbb B \lto \mathsf{id}_\mathcal C$ is a
distributive law. The triple $(\id_\mathcal C, \sigma,
\mathsf{id}_{SD})$ is a factorisation of $\theta \colon
\mathbb B \lto \mathbb D$, so we get a factorisation $(\Sigma, \sigma,
\mathsf{id}_{\Sigma \tilde D})$ of $\theta \colon \tilde{\mathbb B} 
\lto \tilde{\mathbb D}$. Explicitly, $\Sigma \colon \mathcal C^\mathbb B \lto \mathcal C^\mathbb B$ is given by
$$
	\Sigma (X, \alpha) =
	(X, \alpha \circ \sigma_X), \quad\quad
    \Sigma (f) = f.
$$

Observe that the composition of monad morphisms
corresponds under this assignment to the monoidal
structure in $\mathcal{F}(\theta)$, so when viewing the
monad morphisms as a monoidal category with composition
as tensor product and the identity
$\mathsf{\mathrm{id}}_B$ as unit object, we have:

\begin{prop}
The assignment $ \sigma \lmapsto
(\Sigma,\sigma,\mathsf{id}_{\Sigma\tilde D})$ is a monoidal functor.
\end{prop}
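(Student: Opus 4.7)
The plan is to verify the three structural properties of a strict monoidal functor between strict monoidal categories: functoriality, preservation of the unit, and preservation of the tensor product. Since both monoidal categories are strict, there are no coherence isomorphisms to worry about.

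First I would verify well-definedness on objects. For any monad morphism $\sigma$ compatible with $\theta$, the triple $(\Sigma, \sigma, \id_{\Sigma \tilde D})$ lies in $\mathcal F(\theta)$; this is already established in the preceding discussion, where $(\id_\mathcal C, \sigma, \id_{SD})$ is a factorisation of $\theta \colon \mathbb B \lto \mathbb D$ that lifts to the Eilenberg--Moore category. Unit preservation is then immediate: taking $\sigma = \id_B$, the formula $\Sigma(X, \alpha) = (X, \alpha \circ \sigma_X)$ collapses to the identity functor on $\mathcal C^\mathbb B$, and both distributive-law components become identities, giving $\mathbf 1 = (\id_{\mathcal C^\mathbb B}, \id_{\tilde B}, \id_{\tilde D})$.

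The main step is tensor preservation. Given two compatible monad morphisms $\sigma_1, \sigma_2$, their tensor product in the source is their vertical composition as natural transformations $B \lto B$; with the convention $\sigma_1 \otimes \sigma_2 = \sigma_2 \circ \sigma_1$, a direct calculation gives $\Sigma_1 \Sigma_2(X, \alpha) = \Sigma_1(X, \alpha \circ \sigma_{2,X}) = (X, \alpha \circ \sigma_{2,X} \circ \sigma_{1,X})$, which agrees with the lift associated to $\sigma_2 \circ \sigma_1$. For the middle component, the monoidal structure of $\mathcal F(\theta)$ yields $\Sigma_1 \sigma_2 \circ \sigma_1 \Sigma_2$; unpacking the lift of the distributive law $\sigma_i \colon \tilde{\mathbb B} \lto \Sigma_i$, whose component at $(X, \alpha)$ is determined by $\sigma_{i,X}$, this horizontal-then-vertical composite matches the lift of $\sigma_2 \circ \sigma_1$ viewed as a distributive law $\tilde{\mathbb B} \lto \Sigma_1 \Sigma_2$. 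The third component is the identity on both sides.

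The main obstacle is not conceptual but notational: keeping track of the order of the vertical composition so that it matches the horizontal order imposed by $\otimes$ in $\mathcal F(\theta)$, and unpacking the lifted distributive laws to verify the middle-component equality in $\mathcal C^\mathbb B$. Functoriality on morphisms is automatic when the source is treated as a discrete monoidal category of monad morphisms; if one wished to enlarge the source to include natural transformations between monad morphisms as $2$-cells, the requisite naturality would follow from the functoriality of the lifting construction established in Section~\ref{Eilenberg-Moore}.
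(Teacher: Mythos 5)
Your proposal is correct and follows essentially the same route as the paper, which proves the proposition simply by observing that composition of compatible monad morphisms corresponds under $\sigma \lmapsto (\Sigma,\sigma,\mathsf{id}_{\Sigma\tilde D})$ to the tensor product in $\mathcal F(\theta)$, exactly the computation $\Sigma_1\Sigma_2(X,\alpha)=(X,\alpha\circ\sigma_{2,X}\circ\sigma_{1,X})$ and the matching of middle components that you carry out. Your remark that the source tensor must be taken as $\sigma_1\otimes\sigma_2=\sigma_2\circ\sigma_1$ (i.e.\ opposite composition) for the functor to be strictly monoidal is a legitimate reading of the paper's loosely stated ``composition as tensor product'' and is the correct convention.
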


The factorisation given in Proposition~\ref{bimoduletwist} arises in this way.
\end{exa}

\begin{exa}\label{Hopf}
Let $k$ be a commutative ring and let $U$ be a Hopf algebra over $k$. We use Sweedler notation to denote the coproduct
 $$
  \Delta(u) = u_{(1)} \otimes u_{(2)}.
  $$
  See~\cite{MR0252485,MR1243637} for more information about Hopf algebras.
  
Consider the category $\mathcal C =
k\mbox{-}\mathsf{Mod}$. The functor $B = - \otimes_k U
\colon \mathcal C \lto \mathcal C$ is part of a monad
$\mathbb B$ where the multiplication is given by the
multiplication of the algebra $U$ and the unit is given
by the unit of the algebra $U$. Dually, the functor $D = U \otimes_k - \colon \mathcal C \lto \mathcal C$ is part of a comonad, whose structure is given by the comultiplication and counit of the coalgebra $U$. There is a mixed distributive law $\theta \colon \mathbb B \lto \mathbb D$ given by
\begin{align*}
  \theta_X \colon U \otimes_k X \otimes_k U &\lto U \otimes_k X\otimes_k U \\
  u \otimes x \otimes v &\lmapsto S(v_{(2)})u \otimes x
\otimes v_{(1)}.
\end{align*}
Let $P$ be any right $U$-module. This defines a functor $P \otimes_k - \colon \mathcal C \lto \mathcal C$. The maps
\begin{align*}
\sigma_X \colon P \otimes_k X \otimes_k U &\lto P \otimes_k X \otimes_k U \\
p \otimes x \otimes u &\lmapsto pu_{(1)} \otimes x \otimes u_{(2)}
\end{align*}
define a distributive law $\sigma \colon \mathbb B \lto P \otimes_k -$ and the maps
\begin{align*}
\gamma_X \colon P \otimes_k U \otimes_k X &\lto U \otimes_k P \otimes_k X \\
p \otimes u \otimes x &\lmapsto u \otimes p \otimes x
\end{align*}
define a distributive law $\gamma \colon P \otimes_k - \lto \mathbb D$. The triple $(P \otimes -, \sigma, \gamma)$ is a factorisation of $\theta \colon \mathbb B \lto \mathbb D$, and so this gives a factorisation of $\theta \colon \tilde{\mathbb B} \lto \tilde{\mathbb D}$ in the category $\mathcal C^\mathbb B \cong \operatorname{Mod}\mbox{-}U$.
\end{exa}
\begin{exa}\label{Hopf2}
Let $\mathcal C =  k\mbox{-}\mathsf{Mod}$ where $k$ is a commutative ring, and consider the functor $B = U \otimes_k - \colon \mathcal C \lto \mathcal C$. Similarly to Example~\ref{Hopf2}, this is simultaneously part of a monad $\mathbb B$ and a comonad $\mathbb D$. There is a mixed distributive law $\theta \colon \mathbb B \lto \mathbb D$ given by
\begin{align*}
  \theta_X \colon U \otimes_k U \otimes_k X &\lto U \otimes_k U \otimes_k X \\
  u \otimes v \otimes x &\lmapsto vS(u_{(2)})\otimes u_{(1)}\otimes x
\end{align*}
and a distributive law $\tau \colon \mathbb B \lto B$ given by 
\begin{align*}
\tau_X \colon U \otimes_k U\otimes_k X &\lto U \otimes_k U \otimes_k X \\
u \otimes v \otimes x &\lmapsto v \otimes u \otimes x.
\end{align*}
If $U$ is commutative (or even just if the antipode $S$ maps into the centre of $U$), then $(B, \tau, \theta)$ is a factorisation of $\theta \colon \mathbb B \lto \mathbb D$ and so $(\tilde B, \tau, \theta)$ is a factorisation of $\theta \colon \tilde{\mathbb B} \lto \tilde{\mathbb B}$ in $\mathcal C^\mathbb B \cong U\mbox{-}\mathsf{Mod}$.
\end{exa}

\subsection{Braided distributive laws}\label{braided} 
Let $\chi \colon \mathbb T \lto \mathbb C$ be a comonad distributive law on a category $\mathcal A$.
\begin{defn}
A distributive law $\tau \colon \mathbb T \lto T$ between the comonad $\mathbb T$ and the endofunctor $T$ is \emph{braided with respect to $\chi$} if the hexagon
$$
\xymatrix@R=0.5em{
&  TTC \ar[r]^-{T \chi} & T CT \ar[dr]^-{\chi T} &\\
TT C \ar[ur]^-{\tau C} \ar[dr]_-{T\chi} & & & C T T \\
& TCT \ar[r]_-{\chi T} & CTT \ar[ur]_-{C \tau} &
}
$$
commutes. Dually, we say that a distributive law 
$\phi \colon C \lto \mathbb C$ between the endo\-functor $C$ and the comonad $\mathbb C$ is \emph{braided with respect to $\chi$} if a similar hexagon commutes.
\end{defn}
Clearly, $\tau$ is braided if and only if $(T, \tau, \chi)$ is a factorisation of $\chi$, since the above hexagon is just the Yang-Baxter condition in that case. In the dual case, $(C, \chi, \phi)$ would be a factorisation of $\chi$.

\begin{exa}
In Example~\ref{Hopf2}, the distributive law $\tau$ is braided with respect to $\theta$.
\end{exa}
\begin{exa}
Let $\tau \colon \mathbb T \lto \mathbb T$ be a BD-law. These are defined in~\cite{MR2116327} and are exactly those distributive laws which are braided with respect to themselves. Thus $(T, \tau, \tau)$ is a factorisation of $\tau$.
\end{exa}

\begin{exa}\label{braidd}
For this example we relax the assumption that
monoidal categories are strict.  
Let $\mathcal A$ be a braided monoidal category with tensor product 
$\otimes$, associator morphisms $\alpha$ and braiding morphisms $b$. Let $\mathbf U= (U, \Delta^U, \epsilon^U)$ and $\mathbf V = (V, \Delta^V, \epsilon^V)$ be comonoids in $\mathcal A$. The comonoids $\mathbf U, \mathbf V$ define two comonads $\mathbb U, \mathbb V$ with endofunctors $U \otimes -, V \otimes -$ respectively, and three distributive laws $\chi \colon \mathbb U \lto \mathbb V$, $\tau\colon \mathbb U \lto \mathbb U$  and $\phi \colon \mathbb V \lto \mathbb V$ defined by
\begin{align*}
\xymatrix@C=3em{
U \otimes (V \otimes X)\ar[r]^-{\alpha^{-1}_{U,V,X}} & (U \otimes V) \otimes X\ar[r]^{b_{U,V} \otimes \mathsf{id}} & (V \otimes U) \otimes X \ar[r]^-{\alpha_{V,U,X}} & V \otimes (U \otimes X)
}\\
\xymatrix@C=3em{
U \otimes (U \otimes X)\ar[r]^-{\alpha^{-1}_{U,U,X}} & (U \otimes U) \otimes X\ar[r]^{b_{U,U} \otimes \mathsf{id}} & (U \otimes U) \otimes X \ar[r]^-{\alpha_{U,U,X}} & U \otimes (U \otimes X)
}\\
\xymatrix@C=3em{
V \otimes (V \otimes X)\ar[r]^-{\alpha^{-1}_{V,V,X}} & (V \otimes V) \otimes X\ar[r]^{b_{V,V} \otimes \mathsf{id}} & (V \otimes V) \otimes X \ar[r]^-{\alpha_{V,V,X}} & V \otimes (V \otimes X)
}
\end{align*}
respectively. The distributive laws $\tau$ and $\phi$ are both braided with respect to $\chi$ so we get two factorisations $(U \otimes -, \tau, \chi)$ and $(V \otimes -, \chi, \phi)$ of $\chi$. By Proposition~\ref{chi-comonoids} these are both comonoids in $\mathcal F(\chi)$. This example comes from the dual of Example 1.11 in~\cite{MR2501178}.
\end{exa}

\subsection{Quantum doubles}\label{qd}
In our final example, we consider the distributive laws
corresponding to quantum doubles: let $B$ and $C$ be
two Hopf algebras over a commutative ring $k$ and 
$\mathcal{R} \in C \otimes_k B$ be an invertible
2-cycle, meaning that we have
$$
	(\Delta^C \otimes_k \id_B) (\mathcal R)=
	\mathcal R_{13}\mathcal R_{23},\quad
	(\id_C \otimes_k \Delta^B)(\mathcal R)=\mathcal
	R_{12} \mathcal R_{13},	
$$ 
$$
	(\id_C \otimes_k S^B)(\mathcal{R})=\mathcal
	R^{-1},\quad
	(S^C \otimes_k \id_B)(\mathcal R)=\mathcal R^{-1},
$$
where $\mathcal R^{-1}$ refers to the multiplicative
inverse in the tensor product algebra $C \otimes_k B$
and subscripts denote components in $C \otimes_k C
\otimes_k B$ respectively $C \otimes_k B \otimes_k B$. 
We refer to~\cite[]{MR1358358} for more background
information.  

The coalgebras $B$ and $C$ define comonads $\mathbb T$
and $\mathbb C$ on
 $\mathcal A=k\mbox{-}\mathsf{Mod}$ given by $B \otimes_k -$ and 
$C \otimes_k -$ with structure maps given by the
coproducts and the counits. The 2-cycle $\mathcal R$
defines a distributive law $ \chi \colon  
\mathbb T \lto \mathbb C$ given by

\begin{align*}
	\chi _X \colon B \otimes_k C \otimes_k X 
	& \lto 
	C \otimes_k B \otimes_k X \\
	b \otimes c \otimes x & \lmapsto 
	\mathcal R (c \otimes b) \mathcal R^{-1} \otimes x.
\end{align*} 

In this case, every $(B,C^\op)$-bimodule $M$, that is,
a $k$-module $M$ with two commuting left actions of $B$
and $C$, gives rise to a factorisation of $ \chi $: 
let $ \Sigma \colon \mathcal A \lto \ma$ be the functor
$M \otimes_k -$. We define distributive laws 
\begin{align*}
	\sigma_X \colon B \otimes_k M \otimes_k X &\lto 
	M \otimes_k B \otimes_k X,&
	\gamma_X \colon M \otimes_k C \otimes_k X &\lto
	C \otimes_k M \otimes_k X,\\ 
	b \otimes m \otimes x &\lmapsto 
	\mathcal R_{12} (m \otimes b \otimes x),&
	m \otimes c \otimes x &\lmapsto
	\mathcal R_{12}(c \otimes m \otimes x).
\end{align*}
Then a straightforward computation shows that 
$(\Sigma,\sigma,\gamma)$ is a factorisation of $ \chi
$. When $M=k$ with trivial actions given by the
counits, we recover Example~\ref{braidd}.


\begin{thebibliography}{KMT03}

\bibitem[Bec69]{MR0241502}
Jon Beck.
\newblock Distributive laws.
\newblock In {\em Sem. on {T}riples and {C}ategorical {H}omology {T}heory
  ({ETH}, {Z}\"urich, 1966/67)}, pages 119--140. Springer, Berlin, 1969.

\bibitem[BLS11]{MR2863452}
Gabriella B{\"o}hm, Stephen Lack, and Ross Street.
\newblock On the 2-categories of weak distributive laws.
\newblock {\em Comm. Algebra}, 39(12):4567--4583, 2011.

\bibitem[B{\c{S}}08]{MR2415479}
Gabriella B{\"o}hm and Drago{\c{s}} {\c{S}}tefan.
\newblock ({C}o)cyclic (co)homology of bialgebroids: an approach via
  (co)monads.
\newblock {\em Comm. Math. Phys.}, 282(1):239--286, 2008.

\bibitem[B{\c{S}}09]{MR2501178}
Gabriella B{\"o}hm and Dragos {\c{S}}tefan.
\newblock Examples of para-cocyclic objects induced by {\it {bd}}-laws.
\newblock {\em Algebr. Represent. Theory}, 12(2-5):153--180, 2009.

\bibitem[Bur73]{MR0323864}
{\'E}lisabeth Burroni.
\newblock Lois distributives mixtes.
\newblock {\em C. R. Acad. Sci. Paris S\'er. A-B}, 276:A897--A900, 1973.

\bibitem[Bur09]{MR2520969}
Elisabeth Burroni.
\newblock Lois distributives. {A}pplications aux automates stochastiques.
\newblock {\em Theory Appl. Categ.}, 22:No. 7, 199--221, 2009.

\bibitem[CP95]{MR1358358}
Vyjayanthi Chari and Andrew Pressley.
\newblock {\em A guide to quantum groups}.
\newblock Cambridge University Press, Cambridge, 1995.
\newblock Corrected reprint of the 1994 original.

\bibitem[Con85]{MR823176}
Alain Connes.
\newblock Noncommutative differential geometry.
\newblock {\em Inst. Hautes \'Etudes Sci. Publ. Math.}, (62):257--360, 1985.

\bibitem[Con94]{MR1303779}
Alain Connes.
\newblock {\em Noncommutative geometry}.
\newblock Academic Press, Inc., San Diego, CA, 1994.


\bibitem[DK85]{MR826872}
W.~G. Dwyer and D.~M. Kan.
\newblock Normalizing the cyclic modules of {C}onnes.
\newblock {\em Comment. Math. Helv.}, 60(4):582--600, 1985.

\bibitem[KK11]{MR2803876}
Niels Kowalzig and Ulrich Kr{\"a}hmer.
\newblock Cyclic structures in algebraic (co)homology theories.
\newblock {\em Homology Homotopy Appl.}, 13(1):297--318, 2011.

\bibitem[KKS]{1}
Niels Kowalzig, Ulrich Kr{\"a}hmer, and Paul Slevin.
\newblock In preparation.

\bibitem[KLV04]{MR2116327}
Stefano Kasangian, Stephen Lack, and Enrico~M. Vitale.
\newblock Coalgebras, braidings, and distributive laws.
\newblock {\em Theory Appl. Categ.}, 13:No. 8, 129--146, 2004.

\bibitem[KMT03]{MR1943179}
J.~Kustermans, G.~J. Murphy, and L.~Tuset.
\newblock Differential calculi over quantum groups and twisted cyclic cocycles.
\newblock {\em J. Geom. Phys.}, 44(4):570--594, 2003.

\bibitem[Lod08]{MR2504663}
Jean-Louis Loday.
\newblock Generalized bialgebras and triples of operads.
\newblock {\em Ast\'erisque}, (320):x+116, 2008.

\bibitem[ML98]{MR1712872}
Saunders Mac~Lane.
\newblock {\em Categories for the working mathematician}, volume~5 of {\em
  Graduate Texts in Mathematics}.
\newblock Springer-Verlag, New York, second edition, 1998.

\bibitem[Mon93]{MR1243637}
Susan Montgomery.
\newblock {\em Hopf algebras and their actions on rings}, volume~82 of {\em
  CBMS Regional Conference Series in Mathematics}.
\newblock Published for the Conference Board of the Mathematical Sciences,
  Washington, DC; by the American Mathematical Society, Providence, RI, 1993.

\bibitem[Str72]{MR0299653}
Ross Street.
\newblock The formal theory of monads.
\newblock {\em J. Pure Appl. Algebra}, 2(2):149--168, 1972.

\bibitem[Swe69]{MR0252485}
Moss~E. Sweedler.
\newblock {\em Hopf algebras}.
\newblock Mathematics Lecture Note Series. W. A. Benjamin, Inc., New York,
  1969.

\bibitem[Tsy83]{MR695483}
B.~L. Tsygan.
\newblock Homology of matrix {L}ie algebras over rings and the {H}ochschild
  homology.
\newblock {\em Uspekhi Mat. Nauk}, 38(2(230)):217--218, 1983.

\bibitem[Tur96]{MR1692751}
Daniele Turi.
\newblock {\em Functorial operational semantics and its denotational dual}.
\newblock Vrije Universiteit te Amsterdam, Amsterdam, 1996.
\newblock Dissertation, Vrije Universiteit, Amsterdam, 1996.

\bibitem[VW06]{MR2220892}
Daniele Varacca and Glynn Winskel.
\newblock Distributing probability over non-determinism.
\newblock {\em Math. Structures Comput. Sci.}, 16(1):87--113, 2006.

\end{thebibliography}
\end{document}